\def\sideremark#1{\ifvmode\leavevmode\fi\vadjust{\vbox to0pt{\vss
 \hbox to 0pt{\hskip\hsize\hskip1em
 \vbox{\hsize2.1cm\tiny\raggedright\pretolerance10000
  \noindent #1\hfill}\hss}\vbox to15pt{\vfil}\vss}}}%
\numberwithin{equation}{section}
\def\polhk#1{\setbox0=\hbox{#1}{\ooalign{\hidewidth\lower1.5ex\hbox{`}\hidewidth\crcr\unhbox0}}}
\def\XXint#1#2#3{{\setbox0=\hbox{$#1{#2#3}{\int}$ }
\vcenter{\hbox{$#2#3$ }}\kern-.6\wd0}}
\newcommand{\osc}{\operatorname{osc}}
\newcommand{\tr}{\operatorname{Tr}}
\theoremstyle{plain}
\newtheorem{Theorem}{Theorem}[section]
\newtheorem{Definition}[Theorem]{Definition}
\newtheorem{Lemma}[Theorem]{Lemma}
\newtheorem{Proposition}[Theorem]{Proposition}
\newtheorem{Remark}[Theorem]{Remark}
\newtheorem{Assumption}{A}
\begin{document}
\title{ Interior regularity estimates for fully nonlinear equations with arbitrary nonhomogeneous degeneracy laws}

\author{P\^edra D. S. Andrade and Thialita M. Nascimento}
\maketitle
	
\begin{abstract}

In this paper, we study regularity estimates for a class of degenerate, fully nonlinear elliptic equations with arbitrary nonhomogeneous degeneracy laws.  We establish that viscosity solutions are locally continuously differentiable under suitable conditions on the degeneracy laws. Our proof employs improvement of flatness techniques alongside an alternative recursive algorithm for renormalizing the approximating solutions, linking our model to the homogeneous, fully nonlinear, uniformly elliptic equation.

\end{abstract}
\medskip

\noindent \textbf{Keywords}: Interior regularity, Degenerate elliptic equations, Differentiability of solutions

\medskip 

\noindent \textbf{MSC(2020)}: 35B65; 35J70;  37J60.

\section{Introduction}
We investigate the interior regularity of viscosity solutions for a class of degenerate, fully nonlinear equations, given by
\begin{equation}
\mathfrak{F}(x, Du, D^2 u) = f(x) \quad \text{in} \quad B_1,
\end{equation}
where $f$ is a bounded function in the unit ball $B_1 \subset \mathbb{R}^d$, $d\ge 2$, and  $\mathfrak{F}: B_1\times \mathbb{R}^d \times \mathcal{S}(d) \rightarrow \mathbb{R}$ represents a nonlinear degenerate elliptic operator. The operator $\mathfrak{F}$ is governed by two distinct degeneracy laws, which influence its behavior in a critical manner. These laws permit the ellipticity of 
$\mathfrak{F}$ to degenerate as the solution approaches the critical region $\{Du=0\}$, where the gradient of the solution vanishes. A prototype we have in mind is $\mathfrak{F}(x, z, M)= \left[\sigma(|z|) + a(x)\nu(|z|) \right]F(M)$. Hereafter, we are interested in studying equations of the form
\begin{equation}\label{eq_main}
\smallskip
\left[\sigma(|Du|) + a(x)\nu(|Du|) \right]F(D^2 u) = f(x) \quad \text{in} \quad B_1,
\end{equation}
where $\sigma, \nu$ are moduli of continuity, $0\le a(\cdot) \in \mathcal{C}(B_1)$, $F$ is a fully nonlinear uniformly elliptic operator and $f$ is a bounded function in $B_1$. 

Degenerate fully nonlinear equations with degeneracy of double-phase type arise in several contexts, such as optimization, stochastic control, and geometric problems. Especially, models that incorporate a nonhomogeneous degeneracy law are particularly significant in Materials Science.  Models with nonhomogeneous degeneracies offer a more realistic representation of diverse phenomena observed in sciences. For instance, applications of equation \eqref{eq_main} can be found in image restoration problems, as discussed in \cite{HH}.

One of the motivations for studying this class of degenerate problems arises from the Calculus of Variations. Specifically, the model introduced by Zhikov in \cite{Zhikov1993, Zhikov1995, Zhikov1997} to investigate homogenization theory and the occurrence of the Lavrentiev phenomenon, namely
\begin{equation}\label{funct_double-phase}
\int_{B_1} \Phi(x, Dz) \, \mbox{d}x,
\end{equation}
where $\Phi$ denotes the double-phase integrand, given by
\[
\Phi(x, Dz):= |Dz|^p + a(x)|Dz|^q, \quad \quad 1<p< q,
 \]
 see \cite{ZKO-1994}. These models are crucial for studying the behavior of strongly anisotropic materials, where properties associated with the integrand $\Phi$, dependent on the gradient and $x$, can vary significantly from point to point (see, for instance, \cite{CM2015} and references therein). The functional \eqref{funct_double-phase} falls within the class of integral functionals with nonstandard growth conditions, as defined by Marcellini in \cite{Marcellini1989, Marcellini1991}. 
 
 Due to its importance, regularity theory for the minimizers of \eqref{funct_double-phase} has been extensively studied in the past few decades. Colombo and Mingione established optimal regularity results for the minimizers of \eqref{funct_double-phase} by assuming $a(\cdot) \in \mathcal{C}^{0, \alpha}$ with $0<\alpha \le 1$ and $ \frac{q}{p}< 1 + \frac{\alpha}{d}$ in \cite{CM2015}.  Specifically, they showed that minimizers are locally of class $ \mathcal{C}^{1, \alpha}$. In addition, Calder\'on-Zygmund estimates have been obtained in the works \cite{Colombo-Mingione2016, DeFilippis-Mingione2019}. The existence and multiplicity of solutions to double-phase problems of the type \eqref{funct_double-phase} have been proved in \cite{Liu-Dai2018}. We refer the reader to \cite{PMG18, PMG15, Colombo-Mingione2015, ELM2004, Hasto-Ok2022} and references therein for a comprehensive overview of this topic.

The nonvariational counterparts of the models mentioned above can be formulated as singular or degenerate fully nonlinear problems. Equations involving operators whose ellipticity degenerate or blow up as a power of the gradient have been extensively studied in the past few decades. For an account of this matter, see \cite{Araujo-Ricarte-Teixeira2015, Birindelli-Demengel2004, Birindelli-Demengel2007, Birindelli-Demengel2009, Birindelli-Demengel2006, Davila-Felmer-Quaas2010, Davila-Felmer-Quaas2009, Imbert2011, Imbert-Silvestre2013, Imbert-Silvestre2016}. 

In the degenerate setting, the work-horse  model for such equations is 
\begin{equation}\label{First model}
|Du|^p F(D^2 u ) = f(x) \quad \text{in} \quad B_1,
\end{equation}
where $p>0, f\in L^{\infty}(B_1)$ and $F$ is a nonlinear uniformly elliptic operator. 

A notion of first eigenvalue for a more general class of degenerate fully nonlinear models as \eqref{First model} has been introduced by Birindelli and Demengel in a series of papers of \cite{Birindelli-Demengel2004, Birindelli-Demengel2007, Birindelli-Demengel2006}. The authors showed the existence, uniqueness, and maximum principle for these models. Moreover, Birindelli and Demengel proved the comparison principle and Liouville-type results in \cite{Birindelli-Demengel2004}. Alexandroff-Bakelman-Pucci estimate, Harnack inequality and H\"older estimates for the solutions of \eqref{First model} are proved by D\'avila, Felmer, and Quaas in \cite{Davila-Felmer-Quaas2010, Davila-Felmer-Quaas2009} and Imbert in \cite{Imbert2011}.

Regularity estimates for the gradient of the solutions of \eqref{First model} were proved by Imbert and Silvestre in \cite{Imbert-Silvestre2013}. The authors employed the H\"older estimates obtained in \cite{Imbert2011} combined with a Lipschitz estimate derived from the Crandall-Ishii-Lions technique. In \cite{Araujo-Ricarte-Teixeira2015}, Ara\'ujo, Ricarte, and Teixeira have proved the optimal regularity estimates for the solutions of 
\begin{equation}
H(x, Du) F(x, D^2 u) = f(x) \quad \text{in} \quad B_1,
\end{equation}
where $f \in L_{\infty}(B_1)$ and $H: B_1 \times \mathbb{R}^d \rightarrow \mathbb{R}$ satisfying $\lambda |z|^p \le H(x, z) \le \Lambda |z|^p$ for some $p>0$ and $F$ assumed to be a fully nonlinear uniformly elliptic operator. In particular, by using the Crandall-Ishii-Lions method combined with approximation techniques, the authors established that the solutions to \eqref{First model} are locally of class ${\mathcal C}^{1,\alpha_*}$, where $\alpha_* \in (0, 1)$ satisfies
$$
    \alpha_* <  \min \left( \alpha_0, \frac{1}{1+p} \right)
$$ and $\alpha_0 \in (0, 1)$  is the universal exponent in the Krylov-Safonov theory available for $F = 0$. See \cite[Theorem 3.1]{Araujo-Ricarte-Teixeira2015} for more details.

Many authors have addressed further developments, including the case of variable exponents. For example, see \cite{Bronzi-Pimentel-Rampasso-Teixeira} and the references therein. For general degeneracy laws, the first author and their collaborators established $\mathcal{C}^1$- regularity estimates for the viscosity solutions of
\begin{equation}\label{eq_main2} 
\sigma(|Du|) F(D^2 u) = f \quad \text{in} \quad B_1,
\end{equation}
 provided $\sigma: \mathbb{R}_+ \rightarrow \mathbb{R}_+$ is a modulus of continuity whose the inverse $\sigma^{-1}$ is Dini continuous,  $F: {\mathcal{S}(d)} \rightarrow \mathbb{R}$ is a fully nonlinear $(\lambda, \Lambda)$-uniformly elliptic operator and $f$ is continuous and bounded function in $B_1$. For more details, we refer the reader to \cite{APPT}.  
 
 In \cite{PS}, Pimentel and Stolnicki investigated the existence of viscosity solutions for the associated Dirichlet problem and the corresponding free transmission problem for a similar type of degeneracy as in the equation \eqref{eq_main2}. The authors also utilize a Dini continuity condition, as in \cite{APPT}, to establish the differentiability of the solutions.
 
Moreover, under minimal assumptions on the operator, Baasandorj, Byun, Lee, and Lee in \cite{BBLL24} proved optimal regularity results for viscosity solutions of 
 \[
 \Theta(x, |Du|)F(x, D^2u) = f(x) \quad \text{in} \quad B_1
 \]
 showing that solutions are locally of class $\mathcal{C}^{1, \alpha}$ for all $\alpha >0$ satisfying smallness conditions, where $F$ is a fully nonlinear uniformly elliptic operator, $\Theta$ satisfies minimal conditions and $f\in \mathcal{C}(B_1)\cap L^{\infty}(B_1).$

The study of models with double-phase degeneracies in the nondivergence setting can be expressed as follows:
\begin{equation}\label{holder model}
\smallskip
\left[|Du|^p + a(x)|Du|^q \right]F(D^2 u) = f(x) \quad \text{in} \quad B_1,  \quad \quad 0< p \le q.
\end{equation}
was initiated, as far as we know,  in the work of De Filippis in  \cite{DeFilippis2021}. The author demonstrated that the gradient of the solutions satisfies $\mathcal{C}^{0,\alpha}$-estimates, where the exponent $0 < \alpha \le \frac{1}{1+p}$. In \cite{daSilva-Ricarte2020}, Da Silva and Ricarte established sharp regularity estimates for \eqref{holder model}, that is, the authors proved $\mathcal{C}^{1,\alpha_*}$,
for 
$$
    \alpha_* <  \min \left( \alpha_0, \frac{1}{1+p} \right)
$$ 
and again $\alpha_0 \in (0, 1)$  is the universal exponent in the Krylov-Safonov theory available for $F = 0$. For an account of the double degeneracies with variable exponent models, see the survey \cite{BSRRV2022}.

In this manuscript, we are interested in studying viscosity solutions to more general double-phase degenerate equations such as \eqref{eq_main}. First, we observe that the double-phase type equation emerges when $a$ changes sign, specifically as the modulating coefficient $a$ becomes zero or positive. Notably, on the zero level set $\{ a=0\}$, the equation \eqref{eq_main} reduces to
\begin{equation}\label{eq_APPT}
\sigma(|Du|) F(D^2 u) = f(x) \quad \text{in} \quad \{ a=0\} \cap B_1,
\end{equation}
where $\sigma$ is a modulus of continuity that degenerates as a function of the gradient, $F$ is a nonlinear uniformly elliptic operator and $f \in L^{\infty}(B_1)$. Under the additional condition that $\sigma^{-1}$ is Dini continuous, we recover the known results established in \cite{APPT}. In particular, viscosity solutions of equation \eqref{eq_APPT} are locally of class $\mathcal{C}^1$. On the other hand, on the level set $\{a>0\}$, equation \eqref{eq_main} simplifies to
\begin{equation}\label{eq_main01}
\smallskip
\left[\sigma(|Du|) + a(x)\nu(|Du|) \right]F(D^2 u) = f(x) \quad \text{in} \quad \{ a>0\}\cap B_1,
\end{equation}
where $\sigma$, $\nu$ and $F$ and $f$ satisfy the same conditions as in  \eqref{eq_main}. In this case, regularity results remain unknown. Throughout this manuscript, we aim to establish regularity estimates for the model given by \eqref{eq_main01} within the domain $B_1$.  

In other words, equation \eqref{eq_main} is also characterized as a diffusive model, where the ellipticity is affected by a vanishing rate of the gradient of the solutions and the modulating function $a$, both of them are critical in the analysis of this class of partial differential equations. A significant point of interest in this model lies in the uncertainty surrounding the geometry and qualitative properties of the domains $\{ a=0\}\cap B_1,$ and $\{ a>0\}\cap B_1$. Although an interesting line of research, this matter is not covered in the present study.

Observe that the regularity results in \cite{daSilva-Ricarte2020, DeFilippis2021}, see also  \cite{BdSRR} and \cite{Fang-Radulescu-Zhang2021} (for the variable-exponent case), are under the natural order for laws of degeneracy. Heuristically, if $\sigma_1 \prec \sigma_2$, in the sense that 
$$
    \sigma_1 (t) = o (\sigma_2(t)),
$$
then one expects that solutions to equations with a $\sigma_2$ degeneracy law are smoother than those of the corresponding class of equations with $\sigma_1$ law. In other words, a stronger degeneracy law results in a less diffusive model, which consequently diminishes the system's smoothing properties. Thus, for a double-phase degeneracy law as in \eqref{eq_main}, the stronger degeneracy law is expected to quantitatively govern the regularity estimates of the solutions.

 The main goal of this present paper is to establish local differentiability for solutions within this class of equations. To achieve this, we impose suitable conditions on the moduli of continuity $\sigma$ and $\nu$ and employ an improvement of flatness technique, combined with an alternative recursive algorithm that renormalizes the approximating solutions, connecting our model to the homogeneous, fully nonlinear uniformly elliptic equation $F(D^2 u )= 0$.
 
 Our main result in this direction states as follows:

\begin{Theorem}[Differentiability of the solutions]\label{main_theorem}
Let $u \in \mathcal{C}(B_1)$ be a normalized viscosity solution to 
\begin{equation}\label{eq_main1}
\left[\sigma(|Du|) + a(x)\nu(|Du|) \right]F(D^2 u) = f(x) \quad \text{in} \quad B_1,
\end{equation}
where $0\le a(\cdot) \in \mathcal{C}(B_1)$ and $\sigma(\cdot), \nu(\cdot)$ are moduli of continuity with inverses $\sigma^{-1}$, $\nu^{-1}$. Suppose $F$ is a fully nonlinear $(\lambda, \Lambda)$-uniformly elliptic operator,  $\nu (t) = o(\sigma(t))$, $\nu^{-1}$ is Dini continuous,  and $f \in L^{\infty}(B_1)$. Then $u $ are locally of class $\mathcal{C}^1$ and there exists a modulus of continuity $\omega: [0, + \infty) \rightarrow [0, + \infty)$ depending only upon dimension, $\lambda, \Lambda$, $\sigma, \nu$,   and $\| f\|_{L^{\infty}(B_1)}$ such that
\[
|Du(x) - Du(y)| \le \omega(|x - y|)
\]
for every $x, y \in B_{1/4}$.
\end{Theorem}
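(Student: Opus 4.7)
The plan is to prove the theorem by an improvement-of-flatness scheme coupled with a scale-by-scale renormalization that reduces \eqref{eq_main1} to the homogeneous problem $F(D^2 h)=0$, whose interior $\mathcal{C}^{1,\bar\alpha_0}$-regularity follows from Krylov-Safonov theory. Because the conclusion is $\mathcal{C}^1$ with a modulus of continuity $\omega$ (rather than a H\"older rate), the iteration is set up so that the Dini summability of the modulus of $\nu^{-1}$ furnishes $\omega$.

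First I would prove an approximation lemma: assume $\|u\|_{L^\infty(B_1)}\le 1$ and reduce, by a smallness scaling, to the situation where $\|f\|_{L^\infty}$ together with the relevant moduli of the degeneracy coefficients are controlled by a small parameter $\varepsilon$. By compactness of viscosity solutions and stability under half-relaxed limits, together with the hypothesis $\nu(t)=o(\sigma(t))$ (which ensures that, in the limit, the $\nu$-contribution becomes subordinate and decouples from the equation), one can find $h$ solving $F(D^2 h)=0$ in $B_{1/2}$ with $\|u-h\|_{L^\infty(B_{1/2})}\le \delta$ for any prescribed $\delta$, provided $\varepsilon=\varepsilon(\delta)$ is sufficiently small. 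Combining this with the $\mathcal{C}^{1,\bar\alpha_0}$-estimate for $h$ at the origin yields, for a universal $\rho\in(0,1)$ and $\alpha\in(0,\bar\alpha_0)$, an affine map $\ell_1(x)=c_1+p_1\cdot x$ with $|p_1|,|c_1|\le C$ such that $\sup_{B_\rho}|u-\ell_1|\le \rho^{1+\alpha}$.

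Next I would iterate this flatness improvement. Define
\[
u_1(x):=\frac{u(\rho x)-\ell_1(\rho x)}{\rho^{1+\alpha}},
\]
and verify that $u_1$ satisfies a rescaled equation of the same structural form
\[
\bigl[\tilde\sigma_1(|Du_1|)+\tilde a_1(x)\tilde\nu_1(|Du_1|)\bigr]\,F_1(D^2 u_1)=f_1,
\]
where $\tilde\sigma_1,\tilde\nu_1,\tilde a_1,F_1,f_1$ are explicit renormalizations reflecting both the spatial dilation and the subtraction of the affine correction $\ell_1$. The key technical hurdle, and where the argument does its real work, is the structural persistence: one must verify that the rescaled triple $(\tilde\sigma_1,\tilde\nu_1,\tilde a_1)$ still satisfies the hypotheses of the approximation lemma with constants that do not degrade under iteration. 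I would split the analysis into two regimes. In the \emph{nondegenerate regime} where $|p_k|$ stays bounded below, the coefficient $\sigma(|p_k|)+a(x)\nu(|p_k|)$ is bounded below by a positive constant, so dividing by it produces a uniformly elliptic equation to which classical $\mathcal{C}^{1,\alpha_0}$ theory applies directly. In the \emph{degenerate regime} where $|p_k|\to 0$, the relation $\nu=o(\sigma)$ guarantees that the $\nu$-term, once renormalized by the dominant scale set by $\sigma$, remains subordinate, and the rescaled equation falls again under the approximation step, with quantitative constants governed by the moduli of $\sigma^{-1}$ and (more importantly) $\nu^{-1}$, which dominates $\sigma^{-1}$ precisely because $\nu=o(\sigma)$.

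Iterating produces a sequence $\{\ell_k\}$ of affine functions with $\sup_{B_{\rho^k}}|u-\ell_k|\le \rho^{k(1+\alpha)}$ and gradient increments $|p_{k+1}-p_k|\le \eta_k$, where $\eta_k$ reflects the modulus of $\nu^{-1}$ evaluated at a geometric scale $\rho^{k\alpha}$. The Dini continuity of $\nu^{-1}$ then yields $\sum_k \eta_k<\infty$, so $\{p_k\}$ is Cauchy, its limit is $Du(0)$, and the tail $\sum_{k\ge K}\eta_k$ supplies the modulus of continuity $\omega$. Translating the base point to an arbitrary $x_0\in B_{1/4}$ and repeating the argument produces the interior $\mathcal{C}^1$-estimate globally in $B_{1/4}$. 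The principal obstacle is Step~3 — the structural persistence of the double-phase form under the renormalization — and it is precisely there that the hypotheses $\nu=o(\sigma)$ and Dini-continuity of $\nu^{-1}$ are exploited.
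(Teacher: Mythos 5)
Your high-level plan --- approximation by $F$-harmonic functions, improvement of flatness, convergence of approximating hyperplanes via Dini summability of $\nu^{-1}$ --- does mirror the paper's architecture, but the iteration scheme you propose would not close, and this gap coincides exactly with the part you flag as the ``principal obstacle.''

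You set up the iteration with a fixed geometric decay: $u_k(x) = \bigl(u(\rho x) - \ell_k(\rho x)\bigr)/\rho^{k(1+\alpha)}$ and claim $\sup_{B_{\rho^k}}|u-\ell_k|\le\rho^{k(1+\alpha)}$. Two problems follow. First, this claims $\mathcal{C}^{1,\alpha}$ Hölder regularity of the gradient, which is stronger than the $\mathcal{C}^1$-with-a-modulus conclusion the theorem actually asserts; under the Dini hypothesis on $\nu^{-1}$ alone one cannot expect a geometric gradient decay (you even acknowledge this when you say $\omega$ should come from the Dini tail, yet your flatness estimate contradicts that). Second and more seriously, the fixed $\rho^\alpha$ rescaling destroys the structural hypothesis needed to reapply the approximation lemma: after $k$ steps the rescaled moduli take the form $\sigma_k(t)\sim \rho^{-k}\prod\mu_j\,\sigma(\rho^{k\alpha} t)$ (with $\mu_j=\rho^\alpha$ fixed), and since $\sigma(0)=0$ these collapse to zero as $k\to\infty$. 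The ``non-collapsing'' property --- which the paper isolates as the essential ingredient for passing to the limit in the approximation lemma (Definition~\ref{def_noncollapsing}, Proposition~\ref{noncollapsing for inhomog}) --- is exactly what fails. Your discussion of ``nondegenerate regime'' versus ``degenerate regime'' and the appeal to $\nu=o(\sigma)$ does not repair this: $\nu=o(\sigma)$ controls the relative size of the two laws but says nothing about preventing the rescaled $\sigma_k$ itself from degenerating.

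The paper circumvents this with an \emph{adaptive} sequence of scaling factors $(\mu_k)$, which is the genuinely novel step. Rather than fixing $\mu_k=\rho^\alpha$, the value of $\mu_{k+1}\ge\mu_k$ is chosen recursively so that the shoring-up condition
\[
\frac{\prod_{j=1}^{k+1}\mu_j}{r^{k+1}}\,\nu\!\left(\prod_{j=1}^{k+1}\mu_j\,c_{k+1}\right)\ge 1
\]
holds along an auxiliary sequence $(c_k)\in c_0$ produced by Lemma~\ref{existence of noncollapsing}. This guarantees the rescaled $\nu_k$ (and hence the whole law $\sigma_k+a_k\nu_k$, via Proposition~\ref{noncollapsing for inhomog}) remains non-collapsing and the approximation lemma can be invoked at every scale. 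The flatness then only decays as $\bigl(\prod_{i\le k}\mu_i\bigr) r^k$ --- generically slower than any fixed power of $r^k$ --- and it is the Dini summability of $\nu^{-1}$ that makes $(\prod\mu_i)$ summable and yields the modulus $\omega$. Without this adaptive mechanism your iteration simply cannot be carried out uniformly in $k$, and the proof does not go through.
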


It is worth pointing out, that we deal with the case where the moduli of continuity are not necessarily H\"older continuous. Therefore,  Theorem \ref{main_theorem} extends the results of \cite{DeFilippis2021} to equations with arbitrary law of degeneracy as studied in \cite{APPT}.

 To demonstrate Theorem \ref{main_theorem}, we employ the well-known improvement of flatness technique. A major challenge is ensuring that the degeneracy law $\sigma + a(\cdot) \nu$ does not itself degenerate. To address this, we require a non-collapsing property for the term $\sigma + a(\cdot) \nu$. Finally, we employ a recursive geometric construction that establishes the existence of hyperplanes locally comparable to the solutions of scaled equations, for which this non-collapsing property holds. The summability of $\nu^{-1}$, guaranteed by its Dini continuity property, ensures the convergence of this sequence of hyperplanes, thereby completing the proof.

The remainder of this paper is organized as follows: in Section \ref{Prelim sct}, we provide formal definitions, main assumptions and auxiliary results required in this paper.  In Section \ref{Compctness sct}, we prove the universal continuity of the viscosity solution. Sections \ref{Improv_Flatness} and \ref{Regularity sct} are devoted to the proof of Theorem \ref{main_theorem}. 

\section{Preliminaries}\label{Prelim sct}
In this section, we collect basic notions and detail the main assumptions and auxiliary results used throughout this paper. 
\subsection{Main assumptions and  definitions}
We start this section by fixing some notations:  Let $\mathbb{R}^d$ denote the $d$-dimensional Euclidean space. $\mathcal{S}(d)$ denotes the space of real $d \times d$ symmetric matrices. For a number $r >0$, $B_r(x_0)$ will denote the open ball of center $x_0$ and radius $r >0$. When $x_0=0$ we will simply write $B_r$ instead of $B_r(0)$. Next, we present the uniform ellipticity condition of the operator $F$.

\begin{Assumption}[Uniform ellipticity]\label{A1}
We assume the nonlinear operator $F: \mathcal{S}(d) \rightarrow$ $\mathbb{R}$ is uniformly elliptic for some $0<\lambda\leq \Lambda$. That is,
\begin{equation}\label{eq_lambdaeliptic}
	\lambda\|N\| \leq F(M+N)-F(M) \leq \Lambda\|N\|
\end{equation}
for every $M, N \in \mathcal{S}(d)$, with $N \geq 0 $. In addition, for normalization purposes, we assume $F(0)=0$. 
\end{Assumption}
Hereafter, we refer to any operator $F$ satisfying \eqref{eq_lambdaeliptic} as a $(\lambda, \Lambda)$-ellipitc operator.  

The supremum and infimum of all $(\lambda, \Lambda)$-elliptic operators are called the {\it Pucci operators}. In fact, we have the following:
\begin{Definition}[Pucci's extremal operators]
  Let $\lambda$ and $\Lambda$ be positive constants such that $\lambda \le \Lambda$ and $M \in \mathcal{S}(d)$. We set
  \begin{equation}\label{Pucci_-}
\mathcal{M}_{\lambda, \Lambda}^-(M):= \inf_{A \in \mathcal{A}_{\lambda, \Lambda}} \tr(AM)
\end{equation}
 and 
 \begin{equation} \label{Pucci_+}
\mathcal{M}_{\lambda, \Lambda}^-(M):= \sup_{A \in \mathcal{A}_{\lambda, \Lambda}} \tr(AM),
\end{equation}
where $\tr(M)$ is the trace of $M$, and $\mathcal{A}_{\lambda, \Lambda}$ is the space of all symmetric $d \times d$ matrix whose eigenvalues belong to $[\lambda, \Lambda]$.
In particular, since $M = OD O^t$, where $D = e_i \delta_{ij}$ ($e_i$ are the eugenvalues of $M$) and $O$ is an orthogonal matrix,  it is easy to see that  
  \[
  \mathcal{M}_{\lambda, \Lambda}^-(M):= \lambda \sum_{e_i>0} e_i + \Lambda \sum_{e_i<0} e_i
  \]
  and 
   \[
  \mathcal{M}_{\lambda, \Lambda}^+(M) := \Lambda \sum_{e_i>0} e_i + \lambda \sum_{e_i<0} e_i,
  \]
  where $e_i = e_i(M)$ are the eigenvalues of $M$.
\end{Definition}

With the definition of $\mathcal{M}^{\pm}$ at hand, it is equivalent that an operator $F$ is uniformly elliptic if 
\begin{equation}\label{def_ellipticitywPucci}
\mathcal{M}_{\lambda, \Lambda}^-(Y) \leq F(X+Y) -F(Y)\leq  \mathcal{M}_{\lambda, \Lambda}^+(Y), 
\end{equation}
for any $X, Y \in \mathcal{S}(d)$. 

Next, we assume that the modulating coefficient $a(\cdot)$ is nonnegative and continuous in $B_1$ and that the source function $f$ is bounded. That is, 
\begin{Assumption} \label{assump_modulating_coefficient}
$a(\cdot) \ge 0$ and $a(\cdot) \in \mathcal{C}(B_1)$, 
\end{Assumption}
and
\begin{Assumption}\label{assump_f}\rm
 $f\in L^\infty(B_1).$ 
\end{Assumption}

In the sequel,  we assume the appropriate condition on the degeneracy law required for the differentiability result proved in this manuscript. 
\begin{Assumption}\label{assump_moduli}
We suppose $\sigma, \nu : [0, + \infty) \rightarrow [0, + \infty)$ are moduli of continuity satisfying
\begin{equation}\label{moduli decay}\tag{A 4.1}
    \nu(t) \le \sigma(t) ,\, \, \text{for all} \, \,  t \in [0,1].
\end{equation}
We further assume that $\nu$ has an inverse and that 
\begin{equation}\label{Dini cond for nu}\tag{A 4.2}
    \nu^{-1} \quad \text{is  Dini continuous}
\end{equation} and  finally we assume 
\begin{equation}\label{normalization of nu} \tag{A 4.3}
    \nu (1) \ge 1.
\end{equation} The ultimate assumption is a mere normalization. 
\end{Assumption}
In particular, the condition \eqref{moduli decay} implies that 
\begin{equation} \label{normalization_moduli}
\sigma(1) \ge \nu (1) \ge 1.
\end{equation}
We recall that a modulus of continuity is a function $\omega: [0, +\infty) \to [0,+\infty)$ which is monotone increasing and $\lim\limits_{t \to 0} \omega(t) = 0$.  Hence, we observe that $\tilde{\sigma}:= \sigma + a(x)\nu$ is also a modulus of continuity because  $\sigma$ and $\nu$ are modulus of continuity and $a$ is a nonnegative function that does not depend on the gradient. Given the significance of Dini continuity property to the differentiability result, we provide its formal definition.

\begin{Definition}[Dini condition]\label{def_dini}
We say that a modulus of continuity $\omega$  is Dini continuous if it satisfies 
\begin{equation}\label{DC}
	\int_0^\tau\frac{\omega (t)}{t}{\bf d}t\,<\,+\infty,
\end{equation}
for some $\tau>0$. 
\end{Definition}
\begin{Remark}
   An equivalent definition of Dini continuity states that a function $\omega$ is Dini continuous if it satisfies the following condition
\[
\sum_{k =1}^{\infty} \omega(\tau \theta^k) < \infty,
\]
for any $0<\theta<1$.
\end{Remark}

\begin{Remark}
    Notice that the Dini continuity of $\nu^{-1}$  implies the Dini continuity of $\sigma^{-1}$. This follows from inequality   \eqref{moduli decay}, the normalization condition \eqref{normalization_moduli}, and the monotonicity of $\nu$ and $\sigma$. 
\end{Remark}

The Dini continuity is pivotal to our analysis. As shown in \cite{APPT}, the Dini continuity of the inverse degeneracy law enables the construction of a {\it non-collapsing} sequence $(\sigma_j)_{j \in \mathbb{N}}$ (to be defined later) of the degeneracies laws. The concept of non-collapsing sets was introduced in \cite{APPT} to investigate the interior regularity of viscosity solutions of \eqref{eq_APPT}, see Definition \ref{def_noncollapsing} below. In what follows, we recall and define this notion to our context, as it plays a crucial role in establishing the Approximation Lemma in Section \ref{Improv_Flatness}.

\begin{Definition}[{\cite[Definition 3]{APPT}}]\label{def_noncollapsing}
Let $\Sigma$ be a collection of moduli of continuity defined over an interval $I \in \mathcal{I}$. We call $\Sigma$ is a non-collapsing set if for all sequences $(f_j)_{j \in \mathbb{N}} \subset \Sigma$, and all sequences $(a_j)_{j \in \mathbb{N}} \subset I$, then 
\[
f_j(a_j)\rightarrow 0 \quad \text{implies} \quad a_j \rightarrow 0.
\]
Here $\mathcal{I}:=\{ (0, T] \, |\, 0 < T < \infty\} \cup \{\mathbb{R}_0^+\}$.
\end{Definition}

\begin{Remark}
It is worth highlighting that \cite[Proposition 2]{APPT} states that $\Sigma$ is a non-collapsing set, if and only if, for all sequences $(f_j)_{j \in \mathbb{N}} \subset \Sigma$ and $a\in I\setminus\{0\}$, $\liminf_{j \rightarrow \infty} f_j (a) >0$. 
\end{Remark} 

Another method for generating a family of non-collapsing moduli of continuity is through a shoring-up process. For instance, see \cite[Proposition 5]{APPT}. 

\begin{Definition}[ {\cite[Definition 5]{APPT}}, Shoring-up] A sequence of moduli of continuity $(\sigma_j)_{j\in\mathbb{N}}$ is said to be shored-up if there exists a sequence of positive numbers $(c_j)_{j \in \mathbb{N}}$ converging to zero  such that
$$
    \inf\limits_{j}  \sigma_j (c_j)  > 0 \quad \text{for all} \, \, j \in \mathbb{N} .
$$
\end{Definition}

We aim to link the expected regularity estimates for $\left[\sigma(|Du|) + a(x)\nu(|Du|) \right]F(D^2 u) = f$ to the one available for $F(D^2 u) = 0$. To achieve this, we require a type of {\it cancellation} effect, interpreted in the viscosity sense as in \cite{APPT}. For this purpose, we define a {\it non-collapsing} property to our setting. This notion is essential to ensure that the degeneracy law remains well-defined and does not degenerate itself.
 
\begin{Definition}[Non-collapsing property]\label{assump_noncollapsing}
    We say that a family of functions  $\mathcal{F} (B_1, \mathbb{R}) = \{ H : B_1 \times [0, + \infty] \to [0, +\infty] \}$ has the non-collapsing property if for any (fixed) $x \in B_1$ and for all sequences $( H_j (x, \cdot))_{j \in \mathbb{N}} \subset \mathcal{F} $ and $(c_j)_{j \in \mathbb{N}}\subset I$, we have
    $$
        H_j(x, c_j) \to 0 \quad \text{implies} \quad c_j \to 0, 
    $$
where $ I$ is an interval in $ \mathcal{I}:=\{ (0, T] \, |\, 0 < T < \infty\} \cup \{\mathbb{R}_0^+\}$.
\end{Definition}

\begin{Proposition}\label{noncollapsing for inhomog}
    Let $(\sigma_j)_{j \in \mathbb{N}}$ and $(\nu_j)_{j \in \mathbb{N}}$ be moduli of continuity sequences satisfying A\ref{assump_moduli}, and a modulating coefficient sequence $(a_j)_{j \in \mathbb{N}}$ satisfying A\ref{assump_modulating_coefficient}. For all $j \in \mathbb{N}$, we assume $(\sigma_j)_{j \in \mathbb{N}}$ and $(\nu_j)_{j \in \mathbb{N}}$ belong to a family of non-collapsing moduli of continuity $\Sigma$ and define $\mathcal{H} = \{\tilde{\sigma}_j:  \sigma_j(t) + a_j(x) \nu_j(t) \} $ to be a family of modulus of continuity.  Then, $\mathcal{H}$ has the non-collapsing property. 
 \end{Proposition}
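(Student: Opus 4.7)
The plan is to reduce the non-collapsing property of $\mathcal{H}$ directly to the assumed non-collapsing property of $\Sigma$ by exploiting the pointwise lower bound $\tilde{\sigma}_j(x, t) \geq \sigma_j(t)$. This bound is immediate from A\ref{assump_modulating_coefficient} (which supplies $a_j(x) \geq 0$) together with the nonnegativity of any modulus of continuity $\nu_j$. Informally, the perturbation $a_j(x)\nu_j(t)$ can only make $\tilde{\sigma}_j$ \emph{larger} than $\sigma_j$, so any collapsing behavior of $\tilde{\sigma}_j$ is inherited from $\sigma_j$, where it is precluded by assumption.

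First I would fix an arbitrary $x \in B_1$ (the property in Definition~\ref{assump_noncollapsing} must hold pointwise in $x$) and take arbitrary sequences $(\tilde{\sigma}_j(x, \cdot))_{j \in \mathbb{N}} \subset \mathcal{H}$ and $(c_j)_{j \in \mathbb{N}} \subset I$ with
$$
    \tilde{\sigma}_j(x, c_j) \;=\; \sigma_j(c_j) + a_j(x)\,\nu_j(c_j) \;\longrightarrow\; 0.
$$
Since both summands on the right are nonnegative, each must individually tend to zero; in particular $\sigma_j(c_j) \to 0$. Because $(\sigma_j)_{j \in \mathbb{N}} \subset \Sigma$ by hypothesis and $\Sigma$ is a non-collapsing family of moduli of continuity, Definition~\ref{def_noncollapsing} then forces $c_j \to 0$, which is exactly the conclusion required by Definition~\ref{assump_noncollapsing} applied to $\mathcal{H}$.

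I do not anticipate any genuine technical obstacle: the argument is essentially just the domination $\tilde{\sigma}_j(x, \cdot) \geq \sigma_j(\cdot)$. Accordingly, neither the comparison \eqref{moduli decay}, the Dini continuity \eqref{Dini cond for nu}, the normalization \eqref{normalization of nu}, nor the assumption that $(\nu_j) \subset \Sigma$ are needed for this proposition in particular; they are reserved for the approximation and improvement-of-flatness machinery developed in Section~\ref{Improv_Flatness}. As a sanity check one can run the equivalent characterization noted after Definition~\ref{def_noncollapsing}: since $\tilde{\sigma}_j(x, a) \geq \sigma_j(a)$ for every $a \in I \setminus \{0\}$, one immediately gets $\liminf_j \tilde{\sigma}_j(x, a) \geq \liminf_j \sigma_j(a) > 0$, which is a second route to the same conclusion.
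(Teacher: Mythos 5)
Your argument coincides with the paper's own proof: both rely on the pointwise domination $\tilde{\sigma}_j(x,\cdot) \geq \sigma_j(\cdot)$ (a consequence of $a_j \geq 0$ and $\nu_j \geq 0$) to conclude $\sigma_j(c_j) \to 0$ and then invoke the non-collapsing property of $\Sigma$ for $(\sigma_j)$. Your side remark that the Dini condition, the normalization, and the inclusion $(\nu_j)\subset\Sigma$ are not actually used here is accurate, but it does not change the fact that the proof is essentially identical to the one in the paper.
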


 \begin{proof}
    It is straightforward from the Assumptions A\ref{assump_moduli} and A\ref{assump_modulating_coefficient} that $\tilde{\sigma}_j$ is a modulus of continuity for all $j \in \mathbb{N}$. Given a sequence $(\tilde{\sigma}_j)_{j \in \mathbb{N}}:=(\sigma_j + a_j(x) \nu_j)_{j \in \mathbb{N}} \subset \mathcal{H}$ and for all sequences $(c_j)_{j \in \mathbb{N}} \subset I$ such that 
     $$
          \sigma_j(c_j)  + a_j(x) \nu_j (c_j) \to 0. 
     $$
     We have that, since  $ \sigma_j(c_j)  + a_j(x) \nu_j (c_j) \ge \sigma_j(c_j)$ for all $j \ge 1$, there holds
     $$
        0 = \limsup\limits_{j \to \infty}  \left( \sigma_j(c_j)  + a_j(x) \nu_j (c_j)\right)  \ge \limsup\limits_{j \to \infty}  \sigma_j(c_j).
     $$
     Hence, $\lim\limits_{j \to \infty}  \sigma_j(c_j) = 0$. Since $(\sigma_j)_{j \in \mathbb{N}} \in \Sigma$, implies that $c_j \to 0$ for all $j \in \mathbb{N}$. This completes the proof.
 \end{proof}

Next, we recall the notion of viscosity solution used in the paper. 
\begin{Definition}[Viscosity solution]
    Let $G \colon B_1 \times \mathbb{R}^d \times\mathcal{S}(d) \to \mathbb{R}$ be a degenerate elliptic operator. We say that $u$ is a viscosity subsolution to
$$
    G(x,Du,D^2u)=0 \quad \mbox{in} \quad B_1,
$$
if for every $x_0 \in B_1$ and  $\varphi \in \mathcal{C}^2\left(B_1\right)$, such that $ u -\varphi$ attains a local maximum at $x_0$, we have 
$$
    G(x_0, D\varphi(x_0), D^2\varphi(x_0)) \geq 0.
$$
Conversely, we say that $u$ is a viscosity supersolution to
$$
   G(x,Du,D^2u)=0 \quad \mbox{in} \quad B_1,
$$
if for every $x_0 \in B_1$ and  $\varphi \in \mathcal{C}^2\left(B_1 \right)$, such that $u -\varphi$ attains a local minimum at $x_0$ we have 
$$
   G(x_0, D\varphi(x_0), D^2\varphi(x_0)) \leq 0.
$$
A function $u$ is said to be a viscosity solution if it is both a sub and supersolution. 

\end{Definition}

We will call $u \in L^{\infty}(B_1)$ a normalized viscosity solution, if $\| u \|_{L^{\infty}(B_1)} \le 1$. For a full account of the theory of viscosity solutions,  we refer the reader to  \cite{Caffarelli-Cabre1995} and \cite{Crandall-Ishii-Lions1992}. 

We conclude this subsection by discussing the scaling features of the model and the smallness regime with which we will be working.
\begin{Remark}[Smallness regime]
Throughout the paper, we require smallness conditions for the solution $u$ and for the right-hand side $f$ in the equation \eqref{eq_main}, namely
\begin{equation}\label{smallness_condition}
\|u \|_{L^{\infty}(B_1)} \le 1 \quad \quad \text{and} \quad \quad \|f \|_{L^{\infty}(B_1)} \le \varepsilon,
\end{equation}
for some $\varepsilon > 0$ to be determined. Notice that the conditions in \eqref{smallness_condition} are not restrictive. Indeed, consider the function
\[
v(x) := \frac{u(rx)}{K},
\]
for $0< r\ll 1$ and $K>0$ to de chosen.  We have that $v$ solves 
\begin{equation}\label{eq_smallness-condition}
\left[\tilde{\sigma}(|Dv|) + \tilde{a}(x) \tilde{\nu}(|Dv|)\right] \tilde{F}(D^2 v)= \tilde{f}(x) \quad \quad \text{in} \quad \quad B_1,
\end{equation}
where
\[
\tilde{\sigma}(t):= \sigma\left( \frac{K}{r}t\right),\quad \quad\tilde{a}(x):= a(rx), \quad \quad\tilde{\nu}(t):= \nu\left( \frac{K}{r}t\right) 
\]
and 
\[
\tilde{F}(M):= \frac{r^2}{K} F\left(\frac{K}{r^2}M\right), \quad \quad \tilde{f}(x):= \frac{r^2}{K} f(rx) 
\]
Now, we observe that if $\sigma$ and $\nu$ adimit inverse then 
\[
\tilde{\sigma}^{-1}(t):= \frac{r}{K} \sigma^{-1}\left( t\right)\quad \quad \text{and} \quad \quad\tilde{\nu}^{-1}(t):= \frac{r}{K} \nu^{-1}\left( t\right).
\]
In fact, 
\[
\tilde{\sigma}^{-1}(\tilde{\sigma}(t))= \tilde{\sigma}^{-1}\left( \sigma\left( \frac{K}{r}t\right)\right)=  \frac{r}{K} \sigma^{-1}\left( \sigma\left( \frac{K}{r}t\right)\right) = t.
\]
Arguing similarly, we obtain $ \tilde{\nu}^{-1}(\tilde{\nu}(t))= t$. Hence, by taking $r<K$, we have $\tilde{\sigma}^{-1}$ and $\tilde{\nu}^{-1}$ satisfy the Dini continuous property, that is, 
\[
\int_0^1 \frac{\tilde{\sigma}^{-1}(t)}{t} {\bf d}t\le \int_0^1 \frac{\sigma^{-1}(t)}{t} {\bf d} t \quad \quad \text{and} \quad \quad \int_0^1 \frac{\tilde{\nu}^{-1}(t)}{t} {\bf d} t\le \int_0^1 \frac{\nu^{-1}(t)}{t}{\bf d} t.
\]
In addition, 
\[
\tilde{\sigma}(1) = \sigma\left(\frac{K}{r}\right) \ge \sigma(1) \ge 1\quad \quad \text{and} \quad \quad \tilde{\nu}(1) = \nu\left(\frac{K}{r}\right) \ge \nu(1) \ge 1
\]
Therefore, $\sigma$ and $\nu$ satisfy $A\ref{assump_moduli}$. Also, it follows from the definition of $\tilde{F}$ that is a $(\lambda, \Lambda)$-elliptic operator. Finally, by choosing

\[
r:=\varepsilon \quad \quad \text{and} \quad \quad K:=\frac{1}{\| u \|_{L^{\infty}} + \| f \|_{L^{\infty}}},
\]
we obtain \eqref{smallness_condition}, and thus equation \eqref{eq_smallness-condition} belongs to the same class of the operator in \eqref{eq_main}.

 \end{Remark}

 \begin{Remark}[Invariance of the solutions by constants] \label{Rmk-solution}
 Since the equation \eqref{eq_main} does not depend on $u$,  then for any solution $u$  of \eqref{eq_main}, we have that $u_c = u + c$ is also a solution to \eqref{eq_main} for any constant $c \in \mathbb{R}$. Hence, we assume $u(0)=0$ without loss of generality.
 \end{Remark}
 
\subsection{Foundational results}
 In this subsection, we present a collection of auxiliary results that will be used throughout the paper.  We start by stating the {\it Crandall-Ishii-Lions} Lemma that plays a key role in our analysis.

\begin{Proposition}[ {\cite[Theorem 3.2]{Crandall-Ishii-Lions1992}}]\label{IshiiLions}
Let $G:B_1\times\mathbb{R}^d\times\mathcal{S}(d)\to\mathbb{R}$ be a degenerate elliptic operator. Let $\Omega \subset B_1$, $u \in  \mathcal{C}(B_1)$ and $\psi$ be twice continuously differentiable in a neighborhood of $\Omega\times \Omega$. Set 
\[
w(x, y): = u(x) - u(y) \quad \text{for} \quad (x, y) \in \Omega\times \Omega .
\]
If the function $w - \psi$ attains the maximum at $(x_0, y_0) \in \Omega\times \Omega$, then for each $ \varepsilon > 0$, there exist $ X, Y \in {\mathcal S}(d)$ such that
\begin{equation*}\label{viscosity_solution}
G(x_0, D_x \psi(x_0, y_0), X) \leq 0 \leq G(y_0, D_y \psi(x_0, y_0), Y),
\end{equation*}
and the block diagonal matrix with entries X and Y satisfies
\begin{equation*} \label{matrix_inequality}
- \left( \frac{1}{\varepsilon} + \|A \|\right) I \leq 
\left( 
\begin{array}{cc}
X & 0 \\
0 & -Y \\
\end{array}
\right)
\leq  A + \varepsilon A^2, 
\end{equation*}
where $A = D^2 \psi(x_0, y_0)$.
\end{Proposition}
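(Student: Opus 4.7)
The plan is to prove this via the classical sup/inf-convolution machinery of Jensen, Ishii, and Lions. Although $u$ is merely continuous, we regularize it by sup- and inf-convolutions to produce semiconvex and semiconcave approximants, invoke Alexandrov's theorem to obtain pointwise twice-differentiability almost everywhere, apply Jensen's lemma to select a favorable twice-differentiability point where a second-order necessary condition is available, and finally pass to the regularization limit to recover $X$ and $Y$. The pairing $w(x,y)=u(x)-u(y)$ decouples the roles: $u$ as a subsolution at $x_0$ will produce $X$, while $u$ as a supersolution at $y_0$ will produce $-Y$.

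The concrete steps I would carry out are as follows. First, reduce to a strict maximum on a closed subdomain $\overline{\Omega_0}\times\overline{\Omega_0}$ by subtracting $|x-x_0|^4+|y-y_0|^4$, a perturbation which leaves $D\psi$ and $D^2\psi$ at $(x_0,y_0)$ unchanged. Next, for $\delta>0$, define
\[
u^\delta(x):=\sup_{z\in\Omega_0}\Bigl\{u(z)-\tfrac{1}{2\delta}|x-z|^2\Bigr\}, \qquad u_\delta(y):=\inf_{z\in\Omega_0}\Bigl\{u(z)+\tfrac{1}{2\delta}|y-z|^2\Bigr\}.
\]
A standard verification shows that $u^\delta$ is semiconvex and a viscosity subsolution of a perturbed equation with base point shifted by at most $O(\sqrt{\delta})$, and symmetrically that $u_\delta$ is semiconcave and a viscosity supersolution. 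The function $\Phi^\delta(x,y):=u^\delta(x)-u_\delta(y)-\psi(x,y)$ converges uniformly to $w-\psi$ and attains its maximum at some $(x_\delta,y_\delta)\to(x_0,y_0)$. Jensen's lemma, applied to the semiconvex function $(x,y)\mapsto u^\delta(x)-u_\delta(y)$, yields arbitrarily small perturbations of $\Phi^\delta$ whose maxima occur at points of twice-differentiability of both $u^\delta$ and $u_\delta$, and at each such point the second-order necessary condition for a maximum gives
\[
\begin{pmatrix} D^2u^\delta & 0 \\ 0 & -D^2u_\delta \end{pmatrix}\le D^2\psi(x_\delta,y_\delta)+o(1)
\]
as the perturbation scale vanishes.

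The main obstacle lies in upgrading this crude bound $\le A$ to the sharp quadratic inequality $\le A+\varepsilon A^2$ appearing in the statement. This is the Ishii refinement: one uses \emph{quadratic} rather than affine perturbations in Jensen's lemma and performs a careful spectral decomposition of $A$ direction by direction. The parameter $\varepsilon>0$ in the conclusion is the scale of this quadratic perturbation and is independent of the regularization scale $\delta$; the two-sided inequality with lower bound $-(\tfrac{1}{\varepsilon}+\|A\|)I$ then follows by a dual application of the same argument, or simply by noting that any symmetric matrix dominated by $A+\varepsilon A^2$ is automatically bounded below by this quantity. Once these uniform-in-$\delta$ matrix bounds are in place, compactness in $\mathcal{S}(d)\times\mathcal{S}(d)$ extracts convergent subsequences $(X_\delta,Y_\delta):=(D^2u^\delta,D^2u_\delta)\to(X,Y)$, and continuity of $G$ together with the perturbed sub/supersolution inequalities allows us to pass to the limit and recover $G(x_0,D_x\psi(x_0,y_0),X)\le 0\le G(y_0,D_y\psi(x_0,y_0),Y)$, completing the proof.
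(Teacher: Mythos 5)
First, a point of comparison: the paper does not prove this proposition at all --- it is quoted verbatim from Crandall--Ishii--Lions (Theorem 3.2 of the User's Guide) and used as a black box in Lemma \ref{Holder-continuity-lemma}. So there is no in-paper argument to measure you against; what you have sketched is the standard proof of the cited theorem (sup/inf-convolution, Alexandrov, Jensen's lemma, passage to the limit), and the overall skeleton is the right one.

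There is, however, a genuine gap in the step that produces the two-sided matrix inequality. Your claim that ``any symmetric matrix dominated by $A+\varepsilon A^2$ is automatically bounded below by $-(\tfrac{1}{\varepsilon}+\|A\|)I$'' is false: $B=-tI$ satisfies $B\le A+\varepsilon A^2$ for every $t>0$ large, but violates the lower bound; nor does a ``dual application of the same argument'' supply it, since maximality of $w-\psi$ only yields one-sided second-order information. In the actual proof the lower bound is precisely the semiconvexity/semiconcavity bound of the regularizations: one takes the sup- and inf-convolutions at a scale $\lambda$ chosen in terms of $\varepsilon$ and $\|A\|$ (essentially $\lambda^{-1}=\varepsilon^{-1}+\|A\|$), so that $D^2u^\lambda\ge-\lambda^{-1}I$ and $D^2u_\lambda\le\lambda^{-1}I$ give the left-hand inequality, while the right-hand inequality $A+\varepsilon A^2$ comes from the matrix identity $(I-\lambda A)^{-1}A\le A+\varepsilon A^2$ (valid when $\lambda\|A\|<1$ and $\lambda$ is tuned to $\varepsilon$), after replacing $\psi$ by its second-order expansion --- not from a direction-by-direction spectral decomposition, and not with a regularization scale ``independent of $\varepsilon$'' as you assert. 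So the statement you are proving is correct and your strategy is the classical one, but as written the key quantitative step (both sides of the inequality and the coupling between $\varepsilon$ and the convolution parameter) is not justified; you would need to either carry out the $(I-\lambda A)^{-1}$ computation or simply invoke Theorem 3.2 / Theorem A.2 of the User's Guide, as the paper itself does.
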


The following lemma establishes the existence of a shored-up sequence of laws of degeneracy, which can be employed in the approximation technique in Section \ref{approx lemma}, by ensuring that the convergence of the approximating hyperplanes remains maintained.

\begin{Lemma}[{\cite[Lemma 2]{APPT}}]\label{existence of noncollapsing}
   Let $A$ be a compact subset of $\ell_1$ with $0 \notin A$. Given numbers $\varepsilon , \delta > 0$, there exists a sequence of positive numbers $(c_j)_{j \in\mathbb{N}} \in c_0$, satisfying  $\max_{j \in \mathbb{N}}|c_j| \le {\varepsilon}^{-1}$ such that
   $$
   \left(\frac{a_j}{c_j}\right)_{j \in \mathbb{N}} \in \ell_1
   $$
   and
   \begin{equation}
       \varepsilon \left( 1 - \frac{\delta}{2} \right)\| (a_j)\|_{\ell_1} \le \left\| \left( \frac{a_j}{c_j}\right) \right\|_{\ell_1} \le \varepsilon(1 + \delta) \|(a_j)\|_{\ell_1} ,
   \end{equation}
  for all $(a_j)_{j \in \mathbb{N}} \in A$.
\end{Lemma}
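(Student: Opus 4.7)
\medskip

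\noindent\textbf{Proof plan for Lemma \ref{existence of noncollapsing}.}

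\smallskip

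The plan is to exploit the two structural consequences of $A\subset\ell_1$ being compact with $0\notin A$: (i) a positive uniform lower bound
\[
m := \inf_{(a_j)\in A}\|(a_j)\|_{\ell_1} > 0,
\]
obtained from the continuity of the norm and compactness of $A$, and (ii) uniform absolute continuity of the $\ell_1$-mass on tails, namely for every $\eta>0$ there exists $N=N(\eta)\in\mathbb N$ such that
\[
\sup_{(a_j)\in A}\sum_{j>N}|a_j|<\eta,
\]
which is the standard criterion for compactness in $\ell_1$. With these two ingredients in hand, I would construct $(c_j)$ as a piecewise-constant staircase that starts at the admissible maximum $\varepsilon^{-1}$ and halves on longer and longer blocks, so that $c_j\to 0$ while the ``penalty'' $1/c_j-\varepsilon$ stays concentrated on indices where $\sum_{j\in\text{block}}|a_j|$ is already very small uniformly in $a\in A$.

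\smallskip

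Concretely, I would fix the target excess $\eta_k := \delta\, m\, 2^{-2k-2}$ and use the tail criterion to select integers $0=N_{-1}<N_0<N_1<\cdots$ with
\[
\sup_{(a_j)\in A}\sum_{j>N_k}|a_j|\le \eta_k \qquad\text{for every }k\ge 0.
\]
Then I set
\[
c_j := \varepsilon^{-1}\,2^{-k}\quad\text{whenever } N_{k-1}<j\le N_k,\;\; k\ge 0,
\]
so that $0<c_j\le\varepsilon^{-1}$ for all $j$ and $c_j\to 0$, that is, $(c_j)\in c_0$ with $\max_j|c_j|\le\varepsilon^{-1}$, as required.

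\smallskip

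The lower bound is then essentially automatic: because $c_j\le\varepsilon^{-1}$ we have $1/c_j\ge\varepsilon$ pointwise, hence
\[
\Bigl\|\Bigl(\tfrac{a_j}{c_j}\Bigr)\Bigr\|_{\ell_1}\;\ge\;\varepsilon\,\|(a_j)\|_{\ell_1}\;\ge\;\varepsilon\bigl(1-\tfrac{\delta}{2}\bigr)\|(a_j)\|_{\ell_1}.
\]
For the upper bound, I would split and telescope:
\[
\Bigl\|\Bigl(\tfrac{a_j}{c_j}\Bigr)\Bigr\|_{\ell_1}-\varepsilon\|(a_j)\|_{\ell_1}
=\sum_{j\ge 1}\Bigl(\tfrac{1}{c_j}-\varepsilon\Bigr)|a_j|
=\varepsilon\sum_{k\ge 0}(2^k-1)\!\!\sum_{N_{k-1}<j\le N_k}\!\!|a_j|
\;\le\;\varepsilon\sum_{k\ge 0}2^k\eta_{k-1},
\]
where for $k=0$ the block contributes zero since $1/c_j-\varepsilon=0$ there. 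The choice $\eta_k=\delta m\,2^{-2k-2}$ makes $\sum_k 2^k\eta_{k-1}$ a convergent geometric sum bounded by $\delta m\le\delta\|(a_j)\|_{\ell_1}$, yielding
\[
\Bigl\|\Bigl(\tfrac{a_j}{c_j}\Bigr)\Bigr\|_{\ell_1}\le\varepsilon(1+\delta)\|(a_j)\|_{\ell_1},
\]
which in particular gives $(a_j/c_j)\in\ell_1$ uniformly in $A$.

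\smallskip

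The main (and really the only) obstacle is the interaction between the two requirements $c_j\to 0$ and the uniform upper bound: making $c_j$ small blows up $1/c_j$, so one must ensure that the blowup happens precisely on indices where the uniform tail of $A$ is already small. This is where the compactness of $A$ in $\ell_1$ is indispensable and, through the freedom to choose the $N_k$ as large as needed, allows $(c_j)$ to be tuned against $A$ so that the geometric growth $2^k$ in the penalty is strictly dominated by the decay $\eta_k$ of the tails. Once this matching is done, both inequalities are immediate.
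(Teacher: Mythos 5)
Your proposal is correct: the uniform lower bound $m>0$ and the uniform smallness of tails are exactly the two consequences of compactness in $\ell_1$ (with $0\notin A$) that make the staircase weights $c_j=\varepsilon^{-1}2^{-k}$ work, the lower bound follows trivially from $c_j\le\varepsilon^{-1}$, and your choice $\eta_k=\delta m\,2^{-2k-2}$ does make the excess $\varepsilon\sum_{k\ge1}2^k\eta_{k-1}=\varepsilon\delta m\le\varepsilon\delta\|(a_j)\|_{\ell_1}$, giving the stated two-sided estimate uniformly on $A$. Note that the paper itself does not prove this statement; it imports it verbatim as Lemma 2 of the cited reference [APPT], so there is no in-paper argument to compare against, but your blockwise construction is a complete and standard proof of the quoted result. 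Only cosmetic point: in the displayed telescoping sum the term $\eta_{-1}$ is undefined, so write the bound as a sum over $k\ge1$ (as your prose already indicates for the $k=0$ block, where $1/c_j-\varepsilon=0$).
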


\section{Compactness for a family of PDEs with arbitrary nonhomogeneous degeneracy}\label{Compctness sct}

In this section, we establish interior H\"older regularity estimates for a class of fully nonlinear $(\lambda, \Lambda)$-elliptic equations with nonhomogeneous degeneracies. It is important to emphasize that this result is proven without imposing any additional conditions on the moduli of continuity $\sigma$ and $\nu$. The argument follows the lines of \cite[Proposition 1.]{APPT} taking into account that $\nu$ nonincreasing function and that $a(\cdot)\ge 0$. Furthermore, it plays a fundamental role in the proof of Theorem \ref{main_theorem}.

\begin{Lemma}[H\" older continuity] \label{Holder-continuity-lemma}
Let $u \in \mathcal{C}(B_1)$ be a normalized viscosity solution to
\begin{equation}\label{eq_compactness}
\left[\sigma(|Du + \xi|) + a(x)\nu(|Du + \xi|) \right]F(D^2 u) = f(x) \quad \text{in} \quad B_1,
\end{equation}
where $\xi$ is an arbitrary vector in $\mathbb{R}^d$. Suppose A\ref{A1}, A\ref{assump_f}, A\ref{assump_modulating_coefficient} and \eqref{normalization_moduli} hold true. Then $u \in \mathcal{C}^{0, \alpha}_{loc}(B_1)$, for some $\alpha  \in (0, 1).$
\end{Lemma}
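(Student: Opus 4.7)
The plan is to adapt the Crandall--Ishii--Lions doubling-of-variables technique employed in \cite[Proposition 1]{APPT}, taking advantage of the fact that both $a(\cdot) \ge 0$ and $\nu \ge 0$ so that the extra term $a(x)\nu(|Du + \xi|)$ contributes nonnegatively to the effective coefficient in front of $F(D^2 u)$. Fix $x_0 \in B_{1/2}$ and consider the auxiliary function
$$
\Psi(x,y) := u(x) - u(y) - L_1 |x-y|^{\alpha} - L_2 \bigl( |x-x_0|^2 + |y-x_0|^2 \bigr)
$$
on $\overline{B}_{3/4} \times \overline{B}_{3/4}$, with parameters $\alpha \in (0,1)$ and $L_1, L_2 > 0$ to be chosen. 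For $L_2$ large enough depending on $\|u\|_{L^\infty(B_1)}$, the maximum of $\Psi$ is attained at an interior point $(\hat x, \hat y)$, and proving $\Psi \le 0$ on $\overline{B}_{1/2} \times \overline{B}_{1/2}$ yields the desired H\"older estimate. We argue by contradiction, assuming $\Psi(\hat x, \hat y) > 0$, which forces $\hat x \ne \hat y$.

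Proposition \ref{IshiiLions} provides matrices $X, Y \in \mathcal{S}(d)$, the standard matrix inequality, and test gradients $p_x, p_y$ each of magnitude $\sim L_1 |\hat x - \hat y|^{\alpha - 1}$ up to an $O(L_2)$ correction, together with the viscosity inequalities
$$
\bigl[\sigma(|p_x + \xi|) + a(\hat x)\nu(|p_x + \xi|)\bigr] F(X) \le f(\hat x), \qquad \bigl[\sigma(|p_y + \xi|) + a(\hat y)\nu(|p_y + \xi|)\bigr] F(Y) \ge f(\hat y).
$$
The pivotal step is to choose $L_1$ sufficiently large (depending on $|\xi|$ and $L_2$) so that $|p_x + \xi|, |p_y + \xi| \ge 1$. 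The normalization \eqref{normalization_moduli} together with $a, \nu \ge 0$ then gives
$$
\sigma(|p_x + \xi|) + a(\hat x)\nu(|p_x + \xi|) \ge \sigma(1) \ge 1,
$$
and similarly at $(\hat y, p_y)$. Dividing through and applying the uniform ellipticity characterization \eqref{def_ellipticitywPucci} delivers
$$
\mathcal{M}^-_{\lambda, \Lambda}(X - Y) \le 2 \|f\|_{L^\infty(B_1)}.
$$

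To reach a contradiction, the Ishii--Lions matrix inequality, tested against $(\hat q, -\hat q)$ with $\hat q := (\hat x - \hat y)/|\hat x - \hat y|$ and against directions orthogonal to $\hat q$, combined with the strict concavity of $t \mapsto t^\alpha$, produces an eigenvalue of $X - Y$ of order $L_1 |\hat x - \hat y|^{\alpha - 2}$ (very negative) while the remaining eigenvalues are controlled by $C L_2$. Consequently
$$
\mathcal{M}^-_{\lambda, \Lambda}(X - Y) \le -c\, \Lambda\, L_1 |\hat x - \hat y|^{\alpha - 2} + C\, \lambda\, L_2,
$$
which, for $L_1$ chosen sufficiently large, contradicts the previous upper bound. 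Hence $\Psi \le 0$ on $\overline{B}_{1/2} \times \overline{B}_{1/2}$, proving the local H\"older continuity of $u$ with exponent $\alpha \in (0,1)$ depending only on $d, \lambda, \Lambda, |\xi|$, and $\|f\|_{L^\infty(B_1)}$. The principal obstacle is that without any Dini or H\"older hypothesis on $\sigma$ or $\nu$ at this stage, one cannot exclude a priori the collapse of the coefficient $\sigma(|p|) + a(x)\nu(|p|)$ at the doubling point; the normalization \eqref{normalization_moduli} bypasses this by converting ``large test gradient'' into the uniform lower bound ``coefficient $\ge 1$'', effectively reducing the degenerate equation to a uniformly elliptic equation with bounded right-hand side at the doubling configuration.
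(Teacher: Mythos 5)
Your proposal reproduces the doubling-of-variables skeleton of the paper's proof, but it omits the paper's crucial two-case dichotomy in the parameter $\xi$, and as a consequence it proves a strictly weaker statement than what the paper needs.

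The paper's proof distinguishes between $|\xi| > \tilde{C}$ and $|\xi| \le \tilde{C}$, where $\tilde{C}$ is determined by $L_1$ and not by $\xi$. In the first case the argument is exactly doubling-of-variables: because $|\xi|$ dominates $|\xi_{\bar x}|$ and $|\xi_{\bar y}|$ by a fixed factor, $|\xi_{\bar x} + \xi|$ and $|\xi_{\bar y} + \xi|$ are automatically bounded below and the coefficient is $\ge 1$, with $L_1$ chosen independently of $\xi$. In the second case (\emph{Case 2}), where $|\xi|$ is bounded by $\tilde{C}$, the doubling argument is abandoned altogether: the paper observes that whenever $|p| \ge 5\tilde{C}$ one has $|p + \xi| \ge 1$ so the coefficient is $\ge 1$, hence $u$ satisfies the Pucci differential inequalities $\mathcal{M}^+(D^2u) + |f| \ge 0$, $\mathcal{M}^-(D^2u) - |f| \le 0$ ``at large gradients,'' and then invokes the H\"older estimates of Imbert--Silvestre and Delarue for such cut-off Pucci inequalities. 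Your proof has no analogue of this second case.

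Instead you attempt to cover every $\xi$ in one stroke by choosing $L_1$ large \emph{depending on $|\xi|$} so that $|p_x + \xi| \ge 1$, and you state openly that the resulting H\"older data depend on $|\xi|$. This is the genuine gap. First, even for fixed $\xi$ you give no justification that ``large $L_1$'' forces $|p_x + \xi| \ge 1$; the reverse triangle inequality only gives $|p_x + \xi| \ge |p_x| - |\xi|$, so one must first derive a lower bound on $|p_x|$ from the contradiction hypothesis $\Psi(\hat x, \hat y) > 0$ (which forces $|\hat x - \hat y| \le (2/L_1)^{1/\alpha}$ and hence $|p_x| \gtrsim L_1^{1/\alpha}$), a step you neither state nor use, and which still leaves the threshold for $L_1$ growing with $|\xi|$. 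Second, and more fundamentally, a H\"older constant that depends on $|\xi|$ cannot serve the role Lemma \ref{Holder-continuity-lemma} plays in the compactness argument of Lemma \ref{approx lemma}: there, one applies the H\"older estimate to a sequence $u_j$ solving equations with vectors $\xi_j$ that are a priori unbounded, and the constants must be independent of $j$. The entire reason the paper splits into $|\xi| > \tilde{C}$ versus $|\xi| \le \tilde{C}$, invoking the cut-off Pucci regularity theory in the second case, is precisely to obtain an estimate uniform in $\xi$. Your proposal, by absorbing $|\xi|$ into $L_1$, sidesteps the cancellation issue at the cost of losing this uniformity, and therefore does not establish the lemma in the form the paper requires.
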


\begin{proof}
 First, we observe that the proof follows the same lines as in \cite{APPT} with few modifications, which we write down for completeness.
 
 Let us fix $0 < \eta < 1$ and define the modulus of continuity $\omega: [0, + \infty) \rightarrow [0, + \infty) $ as $\omega(t) = t^{\eta}$. For some $0 < r \ll 1$ to be chosen later, we shall prove that there exist positive numbers $L_1$ and $L_2$ such that
\[
\mathcal{L}:= \sup_{x, y \in B_r} \left( u(x) -u(y) - L_1 \omega(|x-y|) - L_2\left(|x - x_0|^2 + |y - x_0|^2\right)\right) \le 0,
\]
for all $x_0 \in B_{r/2}$. Suppose by contradiction that for all positive constants $L_1$ and $L_2$, there exists $x_0 \in B_{r/2}$ for which $\mathcal{L}>0$.

Next, we introduce the auxiliaries functions $\psi, \phi: \overline{B}_r \times \overline{B}_r \rightarrow \mathbb{R}$ given by 
\[
\psi(x, y)= L_1\omega (|x - y|) + L_2(|x - x_0|^2 + |y - x_0|^2) 
\]
and
\[
\phi(x, y) = u(x) - u(y) - \psi(x, y).
\]
Since $\phi$ is a continuous function in $\overline{B}_r \times \overline{B}_r$, there exists a maximum point $(\bar{x}, \bar{y})$ in $\overline{B}_r \times \overline{B}_r$, that is, $\phi(\bar{x}, \bar{y}) = \mathcal{L}$. Hence, 
\[
\psi(\bar{x}, \bar{y}) < u(\bar{x}) - u(\bar{y}) \leq \osc_{B_1} u\leq 2,
\]
which implies
\[
L_1\omega (|x - y|) + L_2(|x - x_0|^2 + |y - x_0|^2)  \leq 2 
\]

By setting $L_2:= \left(\frac{4 \sqrt{2}}{r}\right)^2$, we obtain
\[
|\bar{x} - x_0|^2 \leq \frac{r}{4} \quad \text{and} \quad |\bar{y} - x_0|^2  \leq \frac{r}{4}.
\]
Thus, we can conclude that $\bar{x}, \bar{y}$ are interior points of $B_r$. Observe that it is straightforward $\bar{x} \not= \bar{y}$; otherwise, $\mathcal{L} \leq 0$ trivially. 

Now, we have $D_x\psi$ and $D_y \psi$ at $(\bar{x}, \bar{y})$ stand for 
\[
D_x\psi (\bar{x}, \bar{y})= L_1 \omega^{\prime} (|\bar{x} - \bar{y}|)|\bar{x} - \bar{y}| ^{-1} (\bar{x} - \bar{y}) + 2 L_2(\bar{x} - x_0)
\]
and

\[
D_y\psi(\bar{x}, \bar{y}) = L_1 \omega^{\prime} (|\bar{x} - \bar{y}|)|\bar{x} - \bar{y}| ^{-1} (\bar{x} - \bar{y}) + 2 L_2(\bar{y} - x_0)
\]
To simplify, let us fix the notation
\[
\xi_{\bar{x}} = D_x\psi (\bar{x}, \bar{y}) \quad \text{and} \quad \xi_{ \bar{y}} = D_y\psi (\bar{x}, \bar{y}).
\]

Applying Proposition \ref{IshiiLions}, we obtain that for every $\varepsilon>0,$ there exist matrices $X, Y \in \mathcal{S}(d)$ satisfying the viscosity inequalities 
\begin{equation}\label{inequality_operators}
\left[\sigma(|\xi_{\bar{x}} + \xi|) + a(\bar{x}) \nu(|\xi_{\bar{x}} + \xi|) \right]F(X) - f(\bar{x})\leq 0 
\leq 
\left[\sigma(|\xi_{\bar{y}} + \xi|) + a(\bar{y}) \nu(|\xi_{\bar{y}} + \xi|) \right]F(Y) - f(\bar{y})
\end{equation}
and 
\begin{equation} \label{main-inequality}
\left( 
\begin{array}{cc}
X & 0 \\
0 & -Y \\
\end{array}
\right)
\leq  
\left( 
\begin{array}{cc}
Z & -Z \\
-Z & Z \\
\end{array}
\right)2 L_2 I + \varepsilon A^2, 
\end{equation}
where $A = D^2 \psi(\bar{x}, \bar{y})$ and
\[
Z:= L_1 \omega^{\prime \prime}(|\bar{x} - \bar{y}|) \frac{(\bar{x} - \bar{y})\oplus(\bar{x} - \bar{y})}{|\bar{x} - \bar{y}|^2} + L_1 \frac{\omega^{\prime}(|\bar{x} - \bar{y}|)}{|\bar{x} - \bar{y}|} \left( I -  \frac{(\bar{x} - \bar{y})\oplus(\bar{x} - \bar{y})}{|\bar{x} - \bar{y}|^2} \right).
\]

In what follows, we evaluate some suitable vectors in the matrix inequality \eqref{main-inequality} to obtain information about the eigenvalues of $X - Y$. Consider $v \in \mathbb{S}^{d - 1}$ and the vector $(v, v) \in \mathbb{R}^{2d}$. Hence, 
\[ 
\langle (X-Y)v, v  \rangle \le (4L_2 + 2 \varepsilon \zeta),
\]
with $\zeta:= \| A^2\|$. Therefore, we conclude that all eigenvalues of $X - Y$ are below $4L_2 + 2 \varepsilon \zeta$. Now, consider vectors of the form $(\bar{v}, - \bar{v}) \in \mathbb{R}^{2d}$ such that
\[
\bar{v}:= \frac{\bar{x} - \bar{y}}{|\bar{x} - \bar{y}|}.
\]
Applying to \eqref{main-inequality}, we have
\[
\langle (X - Y)\bar{v}, \bar{v}\rangle \leq 4 L_1 \omega^{\prime \prime}(|\bar{x} - \bar{y}|) + (4L_2 + 2 \varepsilon \zeta) |\bar{v}|^2
\]
Since $\omega$ is twice differentiable, $\omega>0$ and $\omega^{\prime \prime} <0$, implies that at least an eigenvalue of $X- Y$ is below $ - 4 L_1  + (4L_2 + 2 \varepsilon \zeta) $. Notice that for $L_1$ large enough, we have $ - 4 L_1  + (4L_2 + 2 \varepsilon \zeta) $ becomes negative. 

Calculating the Pucci's operator associated with $X- Y$, we obtain
\begin{equation}\label{Pucci_inequality}
\mathcal{M}_{\lambda, \Lambda}^- (X - Y) \geq 4 \lambda L_1 - (\lambda + (d - 1)\Lambda)(4L_2 + 2 \varepsilon \zeta).
\end{equation}
Since $F$ is a $(\lambda, \Lambda)$-uniformly elliptic operator, we can use the inequalities \eqref{inequality_operators} 
 and \eqref{Pucci_inequality} to get the following estimate
\begin{equation} \label{last_estimate}
4 \lambda L_1  \le (\lambda + (d - 1)\Lambda)(4L_2 + 2 \varepsilon \zeta) + \frac{f(\bar{x})}{\sigma(|\xi_{\bar{x}} + \xi|) + a(\bar{x}) \nu(|\xi_{\bar{x}} + \xi|)} - \frac{f(\bar{y})}{\sigma(|\xi_{\bar{y}} + \xi|) + a(\bar{y}) \nu(|\xi_{\bar{y}} + \xi|)}
\end{equation}

Now, consider a positive constant $\tilde{C}$ to be determined later. We analyze two different cases.

{\it Case 1}: $|\xi|> \tilde{C}$.

First, we estimate the value of $|\xi_{\bar{x}}|$. Notice that from the definition of $\xi_{\bar{x}}$, we have that $|\xi_{\bar{x}}|$ is less than $ L_1 |\omega^{\prime}(|\bar{x} - \bar{y}|)| + 2 L_2$, which implies 
\begin{equation*}
|\xi_{\bar{x}}| \leq \tilde{c} L_1, 
\end{equation*}
where $\tilde{c}> 0$ is a constant universal. Now, by choosing $\tilde{C}:= 10\tilde{c}L_1> 1$, with $L_1$ to be determined later. Hence,
\[
|\xi + \xi_{\bar{x}}| \ge \tilde{C} - \frac{\tilde{C}}{10} = \frac{9}{10} \tilde{C}.
\]
Similarly, we obtain $|\xi_{\bar{y}}| \leq \tilde{c} L_1.$ Thus,
\[
|\xi + \xi_{\bar{y}}| \ge \tilde{C} - \frac{\tilde{C}}{10} = \frac{9}{10} \tilde{C}.
\]
Using the fact that $a(\cdot)>0$ combined with $\nu$ is a nonnegative function, we obtain the first inequality below. To conclude the following estimate, we use that $\sigma$ is a nondecreasing function and satisfies \eqref{normalization_moduli}. Therefore
\begin{equation}\label{ineq_01}
\frac{f(\bar{x})}{\sigma(|\xi_{\bar{x}} + \xi|) + a(\bar{x}) \nu(|\xi_{\bar{x}} + \xi|)} \le \frac{\| f \|_{L^{\infty}(B_1)}}{\sigma(|\xi_{\bar{x}} + \xi|) } \le \frac{| f \|_{L^{\infty}(B_1)}}{\sigma(\frac{9}{10} \tilde{C})} \le | f \|_{L^{\infty}(B_1)}
\end{equation}
and 
\begin{equation}\label{ineq_02}
\frac{- f(\bar{y})}{\sigma(|\xi_{\bar{x}} + \xi|) + a(\bar{x}) \nu(|\xi_{\bar{x}} + \xi|)} \le \frac{\| f \|_{L^{\infty}(B_1)}}{\sigma(|\xi_{\bar{y}} + \xi|) } \le \frac{| f \|_{L^{\infty}(B_1)}}{\sigma(\frac{9}{10} \tilde{C})} \le | f \|_{L^{\infty}(B_1)}
\end{equation}
Plugging the inequalities \eqref{ineq_01} and \eqref{ineq_02} in \eqref{last_estimate}, we have 
\begin{equation*} 
4 \lambda L_1  \le (\lambda + (d - 1)\Lambda)(4L_2 + 2 \varepsilon \zeta) + 2 | f \|_{L^{\infty}(B_1)}.
\end{equation*}
Finally, we take $L_1= L_1(\lambda, \Lambda, d, L_2, r) \gg 1$ large enough, which give us a contradiction. Therefore $\mathcal{L} \le 0$, which implies that the solutions of \eqref{eq_compactness} are locally $\eta$-H\"older continuous.

{\it Case 2}: $|\xi| \le \tilde{C}$, where $\tilde{C} $ is defined as in the previous case. 

Consider the operator
\[
\tilde{F}(x, p, M):= \left[\sigma(|p + \xi|) + a(x) \nu(|p + \xi|) \right]F(M). 
\]

The equation can be written as $\tilde{F}(x, Du, D^2 u) = f(x)$. Thus in particular, if $|p| \ge 5 \tilde{C}$, then $| p + \xi | \ge 4\Tilde{C} \ge 1$ and  hence, $\left[\sigma(|p + \xi|) + a(x) \nu(|p + \xi|) \right] \ge \sigma(4\Tilde{C} ) \ge 1$. Therefore,  $\tilde{F}(x, p, M) = f(x)$ with $|p| \ge 5 \tilde{C}$ implies
$$
    \left\{\begin{matrix}
\mathcal{M}^{+} (D^2 u) + |f| \ge 0  \\
 \mathcal{M}^{-} (D^2 u) - |f| \le 0  \\
\end{matrix}\right. 
$$
where $\mathcal{M}^{\pm}$ are the extremal Pucci operators associated with the ellipticity constants of $F$, therefore, it is known from \cite{Imbert-Silvestre2016} and \cite{Delarue2010}, that $u$ is H\"older continuous with estimates depending only on the dimension, ellipticity constants, $\|f\|_{L^{\infty}(B_1)}$ and $\Tilde{{C}}$.

Together with the previous case, this completes the proof of the theorem. 
\end{proof}

\section{Improvement of flatness}\label{Improv_Flatness}
In this section, we present a key approximation result essential to our analysis. Specifically, under appropriate smallness conditions, we show that a normalized viscosity solution to \eqref{eq_main} can be approximated by a function in ${\mathcal{C}}_{loc}^{1, \beta}(B_1)$ for some $\beta \in (0, 1)$.

\begin{Lemma}[Approximation Lemma]\label{approx lemma}
Let $\mathfrak{S}$ be a collection of non-collapsing moduli of continuity and $u \in \mathcal{C}(B_1)$ be a normalized viscosity solution to 
\begin{equation*}
\left[\sigma(|Du + \xi|) + a(x)\nu(|Du + \xi|) \right]F(D^2 u) = f(x) \quad \text{in} \quad B_1,
\end{equation*}
where $\xi  \in \mathbb{R}^d$ is an arbitrary vector and $\sigma(\cdot), \nu(\cdot) \in \mathfrak{S}$. Assume that A\ref{A1} - A\ref{assump_moduli} are in force. Given $\delta > 0$ there exists $\varepsilon > 0$ such that if $\|f\|_{L^{\infty}(B_1)} < \varepsilon$, then one can find a function $h \in {\mathcal{C}}_{loc}^{1, \beta}(B_1)$ such that 
$$
    \| u - h \|_{L^{\infty}(B_{1/2})} < \delta
$$
where $\beta \in (0, 1)$ is a universal constant. 

\end{Lemma}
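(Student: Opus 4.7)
The plan is to proceed by contradiction via a stability/compactness argument, which is the standard strategy for approximation lemmas of this flavour. Assume the conclusion fails: then for some $\delta_0 > 0$ there exist sequences $(u_j), (\sigma_j), (\nu_j), (a_j), (F_j), (\xi_j), (f_j)$ satisfying all the hypotheses of the lemma with $\sigma_j,\nu_j \in \mathfrak{S}$ and $\|f_j\|_{L^\infty(B_1)} \le 1/j$, such that every $u_j$ sits at $L^\infty(B_{1/2})$-distance at least $\delta_0$ from $\mathcal{C}^{1,\beta}_{loc}(B_1)$.

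First, I would extract compactness. The H\"older estimate of Lemma~\ref{Holder-continuity-lemma} applies uniformly to this sequence, since its proof only invokes A\ref{A1}--A\ref{assump_modulating_coefficient} and the normalization \eqref{normalization_moduli}, producing a common H\"older exponent and norm. Arzel\`a--Ascoli then gives, along a subsequence, $u_j \to u_\infty$ uniformly on $\overline{B}_{1/2}$. Along further subsequences, $F_j \to F_\infty$ locally uniformly on $\mathcal{S}(d)$ (using uniform ellipticity and $F_j(0)=0$), $a_j \to a_\infty$ locally uniformly in $B_1$, and the moduli $\sigma_j,\nu_j$ converge locally uniformly to moduli $\sigma_\infty,\nu_\infty$ which, by virtue of $\mathfrak{S}$ being non-collapsing, are genuine moduli of continuity and not identically zero on $(0,\infty)$.

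The heart of the argument is to prove that $u_\infty$ is a viscosity solution of $F_\infty(D^2 u_\infty)=0$ in $B_{1/2}$, after which Caffarelli's interior $\mathcal{C}^{1,\beta}$-regularity for homogeneous fully nonlinear uniformly elliptic equations applies. Given $\varphi\in\mathcal{C}^2$ touching $u_\infty$ strictly from above at $x_0$, I would use the standard perturbation to produce test functions $\varphi_j$ touching $u_j$ from above at $x_j\to x_0$ with $D\varphi_j(x_j)\to D\varphi(x_0)$ and $D^2\varphi_j(x_j)\to D^2\varphi(x_0)$. The viscosity inequality for $u_j$ reads
\begin{equation*}
\bigl[\sigma_j(|D\varphi_j(x_j)+\xi_j|)+a_j(x_j)\nu_j(|D\varphi_j(x_j)+\xi_j|)\bigr]\,F_j(D^2\varphi_j(x_j)) \ge f_j(x_j).
\end{equation*}
If $|\xi_j|\to\infty$, then $|D\varphi_j(x_j)+\xi_j|\to\infty$, and by monotonicity and \eqref{normalization_moduli} the bracketed factor is bounded below by $\sigma_j(1)\ge 1$; dividing and passing to the limit yields $F_\infty(D^2\varphi(x_0))\ge 0$. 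If $|\xi_j|$ is bounded, extract $\xi_j\to\xi_\infty$ and set $p_\infty:=D\varphi(x_0)+\xi_\infty$; when $p_\infty\neq 0$, Proposition~\ref{noncollapsing for inhomog} gives a strictly positive lower bound on the bracket, and the same conclusion follows. The symmetric argument treats the supersolution inequality.

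The main obstacle, which I expect to be the most delicate point, is the subcase where $|\xi_j|$ is bounded and $p_\infty = 0$: the degeneracy factor may collapse in the limit and the viscosity inequality carries no direct information. I would handle this via a two-step reduction: first, apply a small linear perturbation $\eta\cdot x$ to $\varphi$ with $|\eta|>0$ chosen to shift $p_\infty$ off the origin while preserving the touching configuration up to an error $O(|\eta|)$, which reduces the matter to the nondegenerate subcase above, and then send $\eta\to 0$ using the upper semicontinuity intrinsic to the viscosity formulation. Second, when necessary, the shoring-up Lemma~\ref{existence of noncollapsing} furnishes a renormalization of $(\sigma_j,\nu_j)$ that is uniformly bounded away from zero on compact sets while respecting the $\ell_1$-summability dictated by the Dini continuity of $\nu^{-1}$, thereby realising the non-collapsing hypothesis of Definition~\ref{assump_noncollapsing} and Proposition~\ref{noncollapsing for inhomog} along the sequence. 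Once $u_\infty$ is recognised as a viscosity solution of $F_\infty(D^2 u_\infty)=0$, Caffarelli's regularity theorem provides $u_\infty\in\mathcal{C}^{1,\beta}_{loc}(B_{1/2})$ for a universal $\beta\in(0,1)$; taking $h:=u_\infty$ and using the uniform convergence $u_j\to u_\infty$ contradicts $\|u_j-h\|_{L^\infty(B_{1/2})}\ge\delta_0$ for all large $j$, closing the argument.
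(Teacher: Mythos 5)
Your overall framework coincides with the paper's: contradiction, compactness via the H\"older estimate and Arzel\`a--Ascoli, a case split on the boundedness of $(\xi_j)$, and invocation of the non-collapsing property when $|D\varphi(x_0)+\xi_\infty|>0$. Up to and including Case~2.1 your argument matches the paper's Steps~1--3 almost verbatim.

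There is, however, a genuine gap in the critical subcase $|D\varphi(x_0)+\xi_\infty|=0$, and it is precisely the point where the paper has to do real work. Your proposed fix --- perturbing the test function by a small linear term $\eta\cdot x$ and then sending $\eta\to0$ --- does not close the argument. Adding $\eta\cdot x$ shifts the gradient rigidly by $\eta$, but it also moves the touching point: if the new touching point is $\tilde x_j$, the relevant quantity is
\[
\bigl| b + M\tilde x_j + \eta + \xi_j \bigr|,
\]
and since $M\tilde x_j$ is of the same order as $\tilde x_j$, nothing prevents $M\tilde x_j$ from cancelling $\eta$ as $j\to\infty$ (the location $\tilde x_j$ is not under your control, and a priori it could track $-M^{-1}\eta$ on the subspace where $M$ is invertible). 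So the degeneracy factor can still collapse, and the viscosity inequality remains uninformative; sending $\eta\to0$ afterwards does not rescue this. Also, the appeal to the shoring-up Lemma~\ref{existence of noncollapsing} is misplaced here --- that lemma is used in Section~\ref{Regularity sct} to build the iteration scales $(\mu_k)$, not inside the Approximation Lemma, where the non-collapsing property of $\mathfrak S$ is simply a hypothesis.

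The paper's resolution of the subcase is different and essential: it first disposes of the case $\mathrm{Spec}(M)\subset(-\infty,0]$ by ellipticity alone, and otherwise perturbs the quadratic by the \emph{cone} $\kappa\sup_{e\in\mathbb S^{d-1}}\langle P_E x,e\rangle = \kappa|P_E x|$, where $E$ is the span of the eigenvectors of $M$ with positive eigenvalues. The cone is Lipschitz but not $\mathcal C^1$, and this is the point: wherever the touching point $x_j^\kappa$ lands, the (sub)gradient of the cone there has magnitude exactly $\kappa$ in $E$, so it cannot be cancelled by $Mx_j^\kappa+b+\xi_j\to0$; moreover the Hessian contribution $\kappa(\mathrm{Id}+e\otimes e)$ is nonnegative, so ellipticity gives $F_j(M)\le F_j(M+\text{nonneg.})$ and the viscosity inequality still yields $F_\infty(M)\le 0$. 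A smooth linear perturbation lacks both of these robustness properties, so you would need to replace that step by the cone argument (or an equivalent sup-convolution/Jensen--Ishii device) to make the proof go through.
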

\begin{proof}
To simplify the presentation, we split the proof into five steps. 
\medskip

\noindent{\it Step 1.} We argue by contradiction. Suppose that the statement does not hold. Then, there exist sequences $(\sigma_j)_{j \in \mathbb{N}}$,  $(\nu_j)_{j \in \mathbb{N}}$,  $({\xi}_j)_{j \in \mathbb{N}}$,  $(a_j)_{j \in \mathbb{N}}$,  $(u_j)_{j \in \mathbb{N}}$,  $(F_j)_{j \in \mathbb{N}}$,  $(f_j)_{j \in \mathbb{N}}$ and a positive number $\delta_0$ such that, for every $j\in \mathbb{N}$, we have $u_j$ is a normalized viscosity solution to
\begin{equation}\label{eq_approximation-lemma}
\left[\sigma_j(|Du_j + \xi_j|) + a_j(x)\nu_j(|Du_j + \xi_j|) \right]F_j(D^2 u_j) = f_j(x) \quad \text{in} \quad B_1,
\end{equation}
where $u_j(0)= 0$, $F_j:\mathcal{S}(d) \rightarrow \mathbb{R}$ is a $(\lambda, \Lambda)$-elliptic operator and $\sigma_j$ and $\nu_j$ are moduli of continuity satisfying 
\begin{itemize}
\item[(i)] $ a_j \in \mathcal{C}(B_1) $ with $a_j (\cdot ) \ge 0$;

\item[(ii)] $\sigma_j(0)=0$ and $\nu_j(0)=0$;

\item[(iii)] $ \sigma_j(1)\ge \nu_j(1)\ge 1$;

\item[(iv)]If $\sigma_j(t_j) + a_j(x)\nu_j(t_j) \rightarrow 0$  then $t_j \rightarrow 0$ . 
\end{itemize} 
and 
\[
\|f_j\|_{L^{\infty}(B_1)}\le \frac{1}{j},
\]
however
\[
\sup_{x\in B_{1/2}} |u_j(x) - h(x)| \ge \delta_0,
\]
 for all $h \in \mathcal{C}_{loc}^{1, \beta}(B_1) $, where $\beta >0 $ is a universal constant to be determined further in the proof. 
 \medskip
 
\noindent {\it Step 2. } It follows from the uniform ellipticity property along the sequence $(F_j)_{j \in \mathbb{N}}$ that there exists $F_{\infty}$ such that $F_{j}$  converges uniformly to $F_{\infty}$ as $j \rightarrow\infty$. In addition, by applying Lemma \ref{Holder-continuity-lemma} we get that
\[
u_j \in {\mathcal{C}_{loc}^{0, \alpha}} (B_1) \quad \text{and}\quad \|u_j\|_{\mathcal{C}_{loc}^{0, \alpha}} (B_1)  \le C,
\]
where the constants $\alpha \in (0,1)$ and $C>0$ do not depend on $j$. Hence, by applying the Arzelà-Ascoli Theorem, there exists a subsequence $(u_j)_{j \in \mathbb{N}}$ and a continuous function $u_{\infty}$ such that  $u_j$ converges uniformly to $u_{\infty}.$ Now, we aim to show that $ u_{\infty}$ is a normalized viscosity solution to
\begin{equation*} 
F_{\infty}(D^2v) = 0 \quad \text{in} \quad B_{9/10}.
\end{equation*}
\medskip
Without loss of generality, we can assume the test function as a quadratic polynomial $p$, namely
$$
p(x): = u_{\infty} (y) + b \cdot (x - y) + \frac{1}{2} (x -y )^{T} M (x-y).
$$
Also, we can assume that $p$ touches $u_{\infty}$ from bellow at $y$ in $B_{3/4}$ \emph{i.e.,} $p(x) \le u_{\infty} (x)$, $x \in B_{3/4}$, and clearly $p(y) = u_{\infty}(y)$. Without loss of generality, we may assume  $y = 0$. Our goal is to verify that 
\begin{equation}\label{goal equation}
    F_{\infty} (M) \le  0.
\end{equation}
\medskip 

\noindent {\it Step 3.} For $ 0 < r \ll 1$ fixed, define the sequence $(x_j)_{j \in \mathbb{N}}$ such that $x_j \rightarrow 0$ and satisfies
$$
    p(x_j) - u_j(x_j) = \max\limits_{x \in B_r} \left( p(x) - u_j(x_j) \right).
$$
From PDE satisfied by $u_j$  in \eqref{eq_approximation-lemma},  we infer that 

\begin{equation}\label{ineq1-AL}
    \left[ \sigma_j (|b + \xi_j |) + a_j(x_j) \nu_j(| b + \xi_j | ) \right] F_j(M) \le f_j (x_j) . 
\end{equation}

\smallskip

From here, we analyze several cases:

\smallskip
\noindent{\it Case 1.} Let $( \xi_j )_{ j \in \mathbb{N}}$ be an unbounded sequence, then we consider the (renamed) subsequence satisfying $| \xi_j | > j$, for every $j \in \mathbb{N}$. There exists $j^* \in \mathbb{N}$ such that 
$$
    |b + \xi_j | > 1, \, \forall \, j > j^*.
$$
By conditions (i) and (iii), we have  $ 1 \le \sigma_j (|b + \xi_j |) + a_j(x_j) \nu_j (|b + \xi_j|)$. Hence, multiplying \eqref{ineq1-AL} by $\left[ \sigma_j (|b + \xi_j |) + a_j(x_j) \nu_j(| b + \xi_j | ) \right]^{-1}$, we obtain
\smallskip 

$$
F_j(M) \le f_j (x_j) ,
$$
\smallskip

\noindent for every $j > j^*$ . By letting $j \to \infty$,  we get $F_{\infty} (M) \le  0$, as desired. 

\smallskip

\noindent{\it Case 2.} Now, let $( \xi_j )_{ j \in \mathbb{N}}$ be a bounded sequence, then at least through a subsequence
\smallskip
$$
     b + \xi_{j} \longrightarrow b + \xi_{\infty}.
$$
\smallskip
\noindent{\it Case 2.1.} If $|b + \xi_{\infty}| > 0$ then, by property (iv), we have $\sigma_j (|b + \xi_j |) + a_j(x_j) \nu_j(| b + \xi_j | ) \nrightarrow 0$. Therefore, 

$$
    F_j(M) \le \frac{f_j (x_j)}{\sigma_j (|b + \xi_j |) + a_j(x_j) \nu_j(| b + \xi_j | )} \to 0.
$$

\noindent Finally, we reach $F_{\infty} (M) \le 0$, as desired. It remains to analyze the case $|b + \xi_{\infty}| = 0$ ({\it i.e.,} $\xi_{\infty} = - b$ ).

\smallskip
\noindent{\it Case 2.2.} 
In the sequel, we study the case $|b + \xi_j| \to 0$ as $j \to \infty.$ First, notice that if $\mbox{Spec}( M ) \subset ( - \infty, 0 ]$, then ellipticity yields \eqref{goal equation}. Indeed, by ellipticity
$$
    F_{\infty} (M) \le \mathcal{M}_{\lambda, \Lambda}^{+} (M) = \lambda \sum\limits_{i = 1}^{d} \tau_i \le 0
$$
where $\{\tau_i\}_{i=1}^{d}$ are the eigenvalues of $M.$
Thus, we may assume that $M$ has $k \ge1 $ strictly positive eigenvalues.  Let $\{ e_i \}_{i=1}^{k}$ be the associated eigenvectors and define 
$$
    E := \text{Span}\{ e_1 , \dots, e_k \}. 
$$
Next, we consider the direct sum $\mathbb{R}^{d} : = E \oplus G$ and the orthogonal projection $P_{E}$ into $E$. For $\kappa>0$ small enough, we define the test function
\begin{equation} \label{Test_function-AL}
\varphi(x) := \kappa \sup_{ e \in \mathbb{S}^{d-1}} \langle P_E x, e \rangle + b \cdot x + \frac{1}{2} x^T M x.
\end{equation}
Because $u_j \to u_{\infty}$ locally uniformly in $B_1$, and then $\varphi$ touches $u_{j}$ from below at some point $x_j^{\kappa} \in B_r$ such that $x_j^{\kappa} \rightarrow 0$ as $j$ goes to infinity, for every $0 < \kappa \ll 1$ and $j \gg 1$. 

We study the two cases: $x_j^{\kappa} \in G$ and $x_j^{\kappa} \notin G$. First, if $x_j^{\kappa} \in G$, then we may rewrite \eqref{Test_function-AL} as
\[
\kappa \langle P_E x, e \rangle + b \cdot x + \frac{1}{2} x^T M x
\]
touches $u_j$ at $x_j^{\kappa}$ regardless of the direction of $e \in \mathbb{S}^{d-1}$. It is easy to see that
\[
D\langle e,  P_E x\rangle  = P_E (e) \quad \text{and} \quad D^2\langle e,  P_E x\rangle = 0.
\]
Also, we have 
\[
e \in \mathbb{S}^{d-1} \cap E \Longrightarrow  P_E (e) =e \quad \text{and} \quad e \in \mathbb{S}^{d-1} \cap G \Longrightarrow  P_E (e) =0.
\]
Testing the function above in the inequality \eqref{ineq1-AL}, we obtain  
\begin{equation}\label{Case2.2 - 01}
    \left[ \sigma_j (|M x_j^{\kappa}  + b + \xi_j + \kappa e |) + a_j(x_j^{\kappa}) \nu_j(|M x_j^{\kappa} +  b + \xi_j + \kappa e | ) \right] F_j(M) \le f_j (x_j^{\kappa}) 
\end{equation}
for every $e \in \mathbb{S}^{d-1}$. Since, the sequences $|M x_j^{\kappa} |$ and $ |b + \xi_j|$ go to zero, we can fix $\tilde{j} \in \mathbb{N}$ such that $|M x_j^{\kappa} | + |b + \xi_j| \le \frac{\kappa}{2}$ for every $j > \tilde{j}$. Hence, 
\[
|M x_j^{\kappa}  + b + \xi_j + \kappa e | \ge \frac{\kappa}{2}.
\]
Therefore, 
\begin{eqnarray}\label{Case2.2 - estimate1}
\frac{1}{ \left[ \sigma_j (|M x_j^{\kappa}  + b + \xi_j + \kappa e |) + a_j(x_j^{\kappa}) \nu_j(|M x_j^{\kappa} +  b + \xi_j + \kappa e | ) \right]}
&\le& \frac{1}{\sigma_j (\frac{\kappa}{2}) + a_j(x_j^{\kappa}) \nu_j (\frac{\kappa}{2})} \nonumber \\
&\le &  \frac{1}{\sigma_j(\frac{\kappa}{2})}.
\end{eqnarray}
Combining \eqref{Case2.2 - 01} and \eqref{Case2.2 - estimate1}, we get 
$$
    F_j(M) \le \frac{f_j(x_j^{\kappa})}{\sigma_j (\frac{\kappa}{2}) + a_j(x_j^{\kappa}) \nu_j (\frac{\kappa}{2})} \le \frac{\|f_j(x_j^{\kappa})\|_{L^{\infty}(B_1)}}{\sigma_j (\frac{\kappa}{2})} 
 \to 0,
$$
as $j \to \infty,$ \emph{i.e.,} $F_{\infty}(M) \le 0$.

To complete the proof, let us deal with the case $x_j^{\kappa} \notin G$. In that case, we have $P_E x_j^{\kappa} \neq 0$. In this case, we have
$$
    \sup_{ e \in \mathbb{S}^{d-1}} \langle P_E x_j^{\kappa}, e \rangle  = |P_E x_j^{\kappa} | .
$$
It follows from the information available for  $u_j$ that
$$
 H\left( x_j^{\kappa} , \left|M x_j^{\kappa} +  b + \xi_j + \kappa \frac{P_E x_j^{\kappa}}{|P_E x_j^{\kappa}|}  \right| \right)   F_j \left( M+ \kappa \left( Id + \frac{P_E x_j^{\kappa}}{|P_E x_j^{\kappa}|} \otimes \frac{P_E x_j^{\kappa}}{|P_E x_j^{\kappa}|}\right) \right)  \le f_j (x_j^{\kappa}) ,
$$
where 
$$
 H (x_j^{\kappa}, \xi ) = \left[ \sigma_j \left(|\xi| \right) + a_j(x_j^{\kappa}) \nu_j\left(|\xi | \right) \right]. 
$$
We write $x_j^{\kappa}$ as 
$$
    x_j^{\kappa} = \sum\limits_{i=1}^{d} c_i e_i,
$$
where $\{ e_i, i = 1, \dots, d \}$ are the eigenvectors of $M$. Hence, 
$$
    M x_j^{\kappa} = \sum\limits_{i=1}^{k} \tau_i c_i e_i + \sum\limits_{i=k+1}^{d} \tau_i c_i e_i,
$$
with $\tau_{i} > 0$ for $i = 1, \dots, k.$ Since, the sequences $|M x_j^{\kappa} |$ and $ |b + \xi_j|$ go to zero, we can fix $\tilde{j} \in \mathbb{N}$ such that $|M x_j^{\kappa} | + |b + \xi_j| \le \frac{\kappa}{2}$ for every $j > \tilde{j}$. Hence,  
\begin{eqnarray}
    \left|  Mx_j^{\kappa} +  b + \xi_j + \kappa \frac{P_E x_j^{\kappa}}{|P_E x_j^{\kappa}|} \right| &\ge& \kappa - |Mx_j^{\kappa}  + b + \xi_j| 
 > \frac{\kappa}{2}. \nonumber
\end{eqnarray}
Moreover, 
$$
    \kappa \left( Id + \frac{P_E x_j^{\kappa}}{|P_E x_j^{\kappa}|} \otimes \frac{P_E x_j^{\kappa}}{|P_E x_j^{\kappa}|}  \right) \ge 0
$$
in the sense of matrices.  Therefore, by combining the ellipticity of $F$ with the monotonicity of  $\sigma_j$ and $\nu_j$, we get
$$
    F_{j}(M) \le F_j \left( M + \kappa \left( Id + \frac{P_E x_j^{\kappa}}{|P_E x_j^{\kappa}|} \otimes \frac{P_E x_j^{\kappa}}{|P_E x_j^{\kappa}|}\right) \right) \le \frac{f_j(x_j^{\kappa})}{\sigma_j (\frac{\kappa}{2}) + a_j(x_j^{\kappa}) \nu_j (\frac{\kappa}{2})} \le \frac{f_j(x_j^{\kappa})}{\sigma_j (\frac{\kappa}{2} )}\to 0 ,
$$
as $j \to \infty$ as desired. 
\smallskip

\medskip

\noindent {\it Step 4.} Therefore, we can conclude that $F_{\infty} (M) \le 0$ and $u_{\infty}$ is a supersolution to $F_{\infty} = 0$ in the viscosity sense. In addition, we notice that to prove that $u_{\infty}$ is also a subsolution is analogous to the previous case and we omit the details here. Finally, by applying standard results in the regularity theory of viscosity solutions to elliptic equations, for instance  \cite{Caffarelli1989} and \cite{Trudinger1988}, yield that $u_{\infty}$ belongs to ${\mathcal C}_{loc}^{1,\beta} (B_1)$, for some (universal) $\beta \in (0,1)$. We obtain a contradiction by taking $h = u_{\infty}$ and complete proof. 
\end{proof}

\section{Regularity results}\label{Regularity sct}

In this section, we prove the existence of a sequence of affine functions that approximate normalized viscosity solutions to \eqref{eq_perturbed}. The strategy of the proof follows Caffarelli's iteration scheme, which, adapted to our context, consists of finding sequences of affine functions converging to the linear part of $u$. In order to do that, at each iteration,  we generate a sequence of scaled solutions to 
$$
    \mathfrak{F}_k (x, Du_k + \xi_k, D^2 u_k ) = f_k  \quad \text{in} \quad B_1
$$
where $\xi_k \in \mathbb{R}^d$ is an arbitrary vector  and, at each iteration, the new operator $\mathfrak{F}_k (x, z, M )$ has diffusion agent $F_k$ and a law of degeneracy given by 
$$
    \sigma_k (t) + a_k(x) \nu_k (t),
$$
where $0\le a_k (\cdot) \in \mathcal{C}(B_1)$. From here, the goal is to employ the Proposition \ref{approx lemma} to find an $F$-harmonic function in a small neighborhood of the new scaled solution. A careful inspection of the proof of Proposition \ref{approx lemma} reveals that it suffices for $\sigma_k$ to belong to a family of non-collapsing moduli of continuity for every $k \in \mathbb{N}$. Another examination of our iteration schemes below reveals that $a_k(x) = a_k(r^k x)$ and that ${\nu}_k$ will be a modulus of continuity. Therefore, in order to apply Proposition \ref{approx lemma} we only need to ensure that $\sigma_k$ is, in fact, a non-collapsing modulus of continuity for all $k \in \mathbb{N}$. 

In the sequel, we follow the construction employed in \cite{APPT} to get a shored-up sequence of moduli of continuity. As a consequence, the (double) degeneracy law will satisfy the non-collapsing property as in Proposition \ref{noncollapsing for inhomog}.

\subsection{The construction of the shored-up sequence} 

Let $\gamma(t) = t\nu(t)$ and $\varpi (t) = \gamma^{-1} (t)$. In what follows, let $L>0$ and $\beta \in (0,1)$ as in Lemma \ref{Holder-continuity-lemma},  we first consider the case where $t^{\beta} = o(\varpi(t))$. In this case, we choose $0 < r< 1/2$ so small that 
$$
    2Lr^{\beta}: = \mu_1 > r.
$$
On the contrary, if $\varpi(t) = O(t^{\beta})$, we fix $0 < \alpha < \beta $ and $0 < r < 1/2$ so that 
$$
    2 L r^{\beta} = r^{\alpha}=: \mu_1 > r.
$$
Next, we define 
$$
    0 < \theta = \frac{r}{\mu_1} < 1.
$$
Since, under the Assumption \eqref{Dini cond for nu},  $\nu^{-1}$ is  Dini continuous, we have that 
$$
    (a_k)_{k \in \mathbb{N}}: = \left( \nu^{-1} ( \theta^k ) \right)_{ k \in \mathbb{N}}
$$ is a summable sequence. That is, $(a_k)_{k \in \mathbb{N}} \in \ell_1$. Hence, we resort to the Lemma \ref{existence of noncollapsing}. For $0 < \delta < 1/4$, we define $\varepsilon \in (0, 1)$ as $\varepsilon: = ( 1 + \delta)^{-1}$. Applying Lemma \ref{existence of noncollapsing} yields a positive sequence $(c_k)_{k \in \mathbb{N}} \in c_0$ such that 
\begin{equation}\label{estimate_c0}
   \frac{7}{10} \sum\limits_{k = 1}^{\infty} \nu^{-1} ( \theta^k ) \le \sum\limits_{k = 1}^{\infty} \frac{\nu^{-1} ( \theta^k )}{c_k} \le \sum\limits_{k = 1}^{\infty} \nu^{-1} ( \theta^k ). 
\end{equation}
Finally, we generate two sequences of moduli of continuity $(\sigma_k (\cdot) )_{k \in \mathbb{N}}$ and $(\nu_k (\cdot) )_{k \in \mathbb{N}}$ given by 
$$
    \sigma_0(t) = \sigma(t) \quad \text{and} \quad \nu_0(t) = \nu(t),
$$
$$
    \sigma_1(t) = \frac{\mu_1}{r}\sigma(\mu_1 t) \quad \text{and} \quad \nu_1(t) = \frac{\mu_1}{r}\nu( \mu_1 t),
$$
\[
    \sigma_2(t) = \frac{\mu_1\mu_2}{r^2}\sigma(\mu_1 \mu_2 t) \quad \text{and} \quad \nu_2(t) = \frac{\mu_1 \mu_2}{r^2}\nu( \mu_1\mu_2 t),
\]
$$
    \vdots
$$

$$
    \sigma_k(t) = \frac{\prod_{j=1}^k \mu_j}{r^k}\sigma\left( \prod_{j=1}^k \mu_j t \right)  \quad \text{and} \quad \nu_k(t) = \frac{\prod_{j=1}^k \mu_j}{r^k}\nu\left( \prod_{j=1}^k \mu_j  t\right),
$$
where $\mu_1 > r$ has been selected. By applying the same recursive algorithm as in \cite{APPT}, we obtain $(\mu_k)_{k \in \mathbb{N}}$ such that $r<\mu_1 \le \mu_2 \le \ldots\le \mu_k$ and we select the value of $\mu_{k+1}$ as follows: if 
$$
    \frac{\prod_{j=1}^{k+1 }\mu_j}{r^{k+1 }} \nu \left( \prod_{j=1}^{k+1 } \mu_j c_{k +1}\right) \ge 1,
$$
then, we set $\mu_ {k+1} = \mu_{k}$. Otherwise, $\mu_{k} < \mu_{k+1}$ is chosen so that 
$$
    \frac{\prod_{j=1}^{k+1 }\mu_j}{r^{k+1 }} \nu \left( \prod_{j=1}^{k+1 } \mu_j c_{k +1}\right) = 1. 
$$
where $c_{k+1}$ is the $(k+1)$-th element of the sequence $(c_k)_{k \in \mathbb{N}} \in c_0$ that satisfies \eqref{estimate_c0}. Note that the Assumption A\ref{assump_moduli} ensures that 
$$
    \frac{\prod_{j=1}^{k+1 }\mu_j}{r^{k+1 }} \sigma \left( \prod_{j=1}^{k+1 } \mu_j c_{k +1}\right) \ge 1.
$$
We are now ready to construct a sequence of approximation hyperplanes whose difference with $u$ grows in a controlled fashion. In what follows, we consider
\begin{equation}\label{eq_perturbed}
\left[\sigma(|Du + \xi|) + a(x)\nu(|Du + \xi|) \right]F(D^2 u) = f(x) \quad \text{in} \quad B_1,
\end{equation}
where $\xi$ is an arbitrary vector in $\mathbb{R}^d$. Observe that, if $\xi = 0$, we recover the equation \eqref{eq_main}.

\begin{Proposition}\label{step1-induction}
Let $u \in \mathcal{C}(B_1)$ be a normalized viscosity solution to \eqref{eq_perturbed}. Assume A\ref{A1}-A\ref{assump_f} and \eqref{normalization_moduli} are in force. There exists $\varepsilon > 0$ and $ 0 < r < 1/2$ such that, if $\| f\|_{L^{\infty}(B_1)} \le \varepsilon$,  then we can find an affine function $\ell_0(x) = a_0 + \vec{b}_0 \cdot x$, with universally bounded constants $|a_0| + | \vec{b}_0| \le C(n, \lambda, \Lambda)$ such that 
    \begin{equation}
        \sup\limits_{ x \in B_{r}} |u(x) - \ell_0 (x) | \le   \mu_1  r.
    \end{equation} 
\end{Proposition}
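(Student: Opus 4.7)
The plan is to combine the Approximation Lemma (Lemma \ref{approx lemma}) with the interior $\mathcal{C}^{1,\beta}$ regularity of the approximating profile. Given $\delta>0$ to be calibrated later, Lemma \ref{approx lemma} produces $\varepsilon>0$ and a function $h\in \mathcal{C}^{1,\beta}_{\mathrm{loc}}(B_1)$ with $\|u-h\|_{L^\infty(B_{1/2})}<\delta$ whenever $\|f\|_{L^\infty(B_1)}\le \varepsilon$. Since $\|h\|_{L^\infty(B_{1/2})}\le 1+\delta\le 2$, the standard Krylov--Safonov / Caffarelli interior estimates yield a universal constant $C^\star=C^\star(d,\lambda,\Lambda)$ such that $|h(0)|+|Dh(0)|\le C^\star$ and
\[
|h(x)-h(0)-Dh(0)\cdot x|\le C^\star |x|^{1+\beta} \qquad\text{for all } x\in B_{1/4}.
\]

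I would then set $\ell_0(x):= h(0)+Dh(0)\cdot x$, so that the bound $|a_0|+|\vec{b}_0|\le C^\star=C(d,\lambda,\Lambda)$ is automatic. The triangle inequality combined with the two estimates above gives, for any $r\le 1/4$ and every $x\in B_r$,
\[
|u(x)-\ell_0(x)|\le |u(x)-h(x)|+|h(x)-\ell_0(x)|\le \delta + C^\star r^{1+\beta}.
\]

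To close the argument, I would align the free parameter $L$ appearing in the construction of $\mu_1$ with $C^\star$: fix $L:=C^\star$ in the dichotomy, so that in the first branch $\mu_1 r = 2L r^{1+\beta}=2C^\star r^{1+\beta}$, while in the second branch $\mu_1 r = r^{1+\alpha}$ with $2C^\star r^\beta = r^\alpha$ and $\alpha<\beta$. Choosing $r$ as prescribed in the shoring-up construction and then setting $\delta:=C^\star r^{1+\beta}$ in the first case (respectively $\delta:=\tfrac{1}{2}r^{1+\alpha}$ in the second, which is admissible since $C^\star r^{1+\beta}\le \tfrac{1}{2}r^{1+\alpha}$ for $r$ small) yields
\[
|u(x)-\ell_0(x)|\le 2C^\star r^{1+\beta}=\mu_1 r,
\]
as required. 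The number $\varepsilon>0$ is then the one returned by Lemma \ref{approx lemma} for this specific $\delta$.

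The main obstacle lies not in the estimates themselves, which are standard once Lemma \ref{approx lemma} is available, but in the careful bookkeeping of universal constants: one must ensure that the free parameter $L$ baked into the definition of $\mu_1$ in the shoring-up construction can legitimately be identified \emph{a priori} with the $\mathcal{C}^{1,\beta}$-estimate constant $C^\star$ associated with the limiting uniformly elliptic equation $F_\infty(D^2 h)=0$. This identification is precisely what allows the base step to dovetail with the iterative scheme to follow, in which each rescaled equation will inherit the same geometric constants and the same value of $L$. A secondary concern is the uniform treatment of the two branches of the dichotomy defining $\mu_1$, but both reduce to the same calculation above since $\alpha<\beta$ makes the term $C^\star r^{1+\beta}$ negligible against $r^{1+\alpha}$ for $r$ small enough.
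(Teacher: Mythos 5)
Your proposal is correct and follows essentially the same route as the paper: apply the Approximation Lemma to obtain $h \in \mathcal{C}^{1,\beta}_{\mathrm{loc}}$, invoke the interior $\mathcal{C}^{1,\beta}$ estimate for $h$, set $\ell_0(x) = h(0) + Dh(0)\cdot x$, and calibrate $\delta$ against $\mu_1 r$. One cosmetic remark: in the second branch of the dichotomy the paper sets $2Lr^\beta = r^\alpha$ as an \emph{equality}, so $C^\star r^{1+\beta}$ and $\tfrac{1}{2}r^{1+\alpha}$ coincide rather than merely satisfying an inequality; and the constant $L$ in the shoring-up construction is, by design, the very constant from the $\mathcal{C}^{1,\beta}$ estimate for solutions of $F_\infty = 0$, so the identification $L = C^\star$ that you flag as the ``main obstacle'' is built in from the start rather than something that needs to be reconciled.
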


\begin{proof}
    For $\delta > 0$ to be chosen, we can apply Lemma \ref{approx lemma} to find a function $h \in {\mathcal C}_{loc}^{1,\beta} (B_1)$ such that 
    $$
        \sup\limits_{B_{9/10}} |u(x) - h(x) |  < \delta.
    $$
    From the regularity theory available for $h$, we have 
    $$
        \sup\limits_{x \in B_r} | h(x) - h(0) - Dh(0) \cdot x | \le L r^{1+\beta}
    $$
    for some universal constant $L > 0$ and for every $0 < r < 1/2. $ By choosing $a_0 := h(0)$ and $\vec{b}_0 := Dh(0)$, then the triangular inequality yields
    \begin{eqnarray}
        \sup\limits_{ x \in B_{r}} | u(x) - (a_0 + \vec{b}_0\cdot x)| &<& \delta + Lr^{1 + \beta} \nonumber \\
        &=& \delta + \frac{1}{2} \mu_1 r.
    \end{eqnarray}
   Finally, by choosing $\delta = \frac{1}{2} \mu_1 r$, we fix the value of $\varepsilon > 0$, through Lemma \ref{approx lemma}, and the proof is completed. 
\end{proof}

In the sequel, we iterate the above proposition to find a sequence of approximating hyperplanes whose difference with a solution $u$ is controlled at discrete scales. 

\begin{Proposition}
Let $u \in \mathcal{C}(B_1)$ be a normalized viscosity solution to \eqref{eq_perturbed}. Assume A\ref{A1}-A\ref{assump_f} and \eqref{normalization_moduli} hold true. Then, we can find a sequence of affine functions $(\ell_n)_{n \in \mathbb{N}}$ of the form

\[
\ell_n(x):= A_n + B_n \cdot x
\]
satisfying 
\begin{equation*}
 \sup_{x \in B_{r^n}} |u(x) - \ell_n(x)| \le \left( \prod_{i = 1}^n \mu_i \right) r^n,   
\end{equation*}

\begin{equation*}
 |A_{n+1} - A_n| \le C \left( \prod_{i = 1}^n \mu_i \right) r^n,   
\end{equation*} 
and 
\begin{equation*}
 |B_{n+1} - B_n| \le C \prod_{i = 1}^n \mu_i,   
\end{equation*} 
for every $n \in \mathbb{N}$.
\end{Proposition}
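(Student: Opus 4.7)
The plan is to argue by induction on $n$, implementing a Caffarelli-style iteration in which, at each discrete scale, a suitably rescaled version of $u$ falls within the hypotheses of Proposition \ref{step1-induction}. The base case (taking $\ell_0 \equiv 0$, so $A_0 = 0$ and $B_0 = 0$) follows immediately from Proposition \ref{step1-induction} applied to $u$ itself, producing $\ell_1$ together with the three desired estimates at level $n=0$.

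For the inductive step, assuming $\ell_0, \dots, \ell_n$ have been produced, I would introduce the rescaled function
\[
v_n(y) \;:=\; \frac{u(r^n y) - \ell_n(r^n y)}{P_n\, r^n}, \qquad P_n := \prod_{i=1}^n \mu_i,
\]
which is normalized in $B_1$ by the inductive hypothesis. A direct computation, using $Du(r^n y) = P_n Dv_n(y) + B_n$ and $D^2 u(r^n y) = (P_n/r^n) D^2 v_n(y)$, shows that $v_n$ solves an equation of the same form as \eqref{eq_perturbed} in $B_1$, with: the diffusion agent replaced by $F_n(M) := (r^n/P_n) F((P_n/r^n) M)$ (still $(\lambda,\Lambda)$-elliptic), the modulating coefficient by $a_n(y) := a(r^n y)$, the arbitrary vector by $\xi'_n := (B_n + \xi)/P_n$, the source by $f(r^n y)$ (still satisfying the smallness condition $\|f\|_{L^{\infty}(B_1)} \le \varepsilon$), and the degeneracy laws by precisely the rescaled moduli $\sigma_n, \nu_n$ introduced in the shored-up construction preceding the statement.

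The main obstacle — and the entire reason the shoring-up machinery was introduced — is to verify that Proposition \ref{step1-induction} applies \emph{uniformly in} $n$ to this rescaled equation. Concretely, one must check that $(\sigma_n, \nu_n)$ satisfies A\ref{assump_moduli} at every level and that the doubly degenerate law $\sigma_n + a_n \nu_n$ enjoys the non-collapsing property of Definition \ref{assump_noncollapsing}. The normalization $\sigma_n(1) \ge \nu_n(1) \ge 1$ is inherited from the recursive choice of $\mu_{k+1}$, which enforces $\nu_n(c_n) \ge 1$ along the summable sequence $(c_n)$ provided by Lemma \ref{existence of noncollapsing}, combined with monotonicity. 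The non-collapsing property of $(\sigma_n)_n$ then follows from the shoring-up bound as in \cite[Proposition 5]{APPT}, and Proposition \ref{noncollapsing for inhomog} transfers it to $\sigma_n + a_n \nu_n$.

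Granted these uniform verifications, Proposition \ref{step1-induction} applied to $v_n$ (with $\xi$ replaced by $\xi'_n$) produces an affine map $y \mapsto A'_n + B'_n \cdot y$ with $|A'_n| + |B'_n| \le C$ and $\sup_{B_r} |v_n(y) - A'_n - B'_n \cdot y| \le \mu_{n+1}\, r$. Defining
\[
\ell_{n+1}(x) \;:=\; \ell_n(x) + P_n r^n A'_n + P_n B'_n \cdot x
\]
and undoing the rescaling via $y = x/r^n$, the three claimed estimates at level $n+1$ follow immediately from the identity $P_n r^n \cdot \mu_{n+1} r = P_{n+1} r^{n+1}$. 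The hard part is thus not the algebraic bookkeeping but securing uniform non-collapsing control of the rescaled degeneracy across all scales, which is exactly what the preceding construction was designed to furnish.
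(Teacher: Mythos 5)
Your proposal is correct and follows essentially the same iteration scheme as the paper: the paper defines the rescaled solutions recursively ($u_{k+1}(x) = (u_k(rx) - \ell_k(rx))/(\mu_{k+1}r)$) whereas you unroll the recursion into a direct definition $v_n(y) = (u(r^n y) - \ell_n(r^n y))/(P_n r^n)$, but the two are identical after telescoping, the rescaled operators, moduli, and source agree, and both proofs invoke Proposition \ref{step1-induction} at each scale and transfer the bounds back through the same algebra $P_n r^n \mu_{n+1} r = P_{n+1}r^{n+1}$. Your extra emphasis on checking that the shored-up sequence keeps $(\sigma_n,\nu_n)$ non-collapsing and $\nu_n(1)\ge 1$ at every scale is exactly the verification the paper's Step 1 performs implicitly via ``$\nu_1(1)\ge 1$ by construction.''
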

\begin{proof}
The proof follows the reasoning of  \cite[Proposition 7]{APPT}. For completeness, we carry out the details here. For easy presentation, we split the proof into two steps.

\medskip
\noindent {\it Step 1.} We start  by considering  the auxiliary function
\[
u_1(x):= \frac{u(rx) - \ell_0(rx)}{\mu_1 r}
\]
where  $\mu_1$ and $\ell_0$ as in Proposition \ref{step1-induction}. We have $u_1$ solves 
\[
\left[\sigma_1\left( \left|Du_1 + \frac{1}{\mu_1} D \ell \right|\right) + a_1(x) \nu_1\left( \left|Du_1 + \frac{1}{\mu_1} D \ell \right|\right)\right]F_1(D^2 u_1) = f_1(x) \quad \text{in} \quad B_1, 
\]
where
$$
    F_1(M):= \frac{r}{\mu_1} F\left( \frac{\mu_1}{r} M \right),
$$

\[
\sigma_1(t):= \frac{\mu_1}{r} \sigma(\mu_1 t) \quad \text{and} \quad \nu_1(t):= \frac{\mu_1}{r} \nu(\mu_1 t),
\]
and 
\[
a_1(x) := a(r x) \quad \text{and} \quad f_1(x):= f(rx).
\]
Since $\nu_1 (1) \ge 1$ by construction, then $u_1$ is under the scope of Proposition \ref{step1-induction}. Thus, we can find an affine function $\ell_1$ with universal bounds, such that 
$$
    \sup\limits_{ x \in B_r} |u_1(x) - \ell_1 (x) | \le r \mu_1.
$$
Next,  let us define $u_2$ as
$$
    u_2 (x) = \frac{u_1(rx) - \ell_1(rx)}{\mu_2 r},
$$
for $0 < r < \mu_1 \le \mu_2$ chosen earlier. We readily check that $u_2$ solves
\[
\left[\sigma_2\left( \left|Du_2 + \frac{1}{\mu_2} D \ell_1 \right|\right) + a_2(x) \nu_2\left( \left|Du_2 + \frac{1}{\mu_2} D \ell_1 \right|\right)\right]F_2(D^2 u_2) = f_2(x) \quad \text{in} \quad B_1, 
\]
where
$$
    F_2(M):= \frac{r}{\mu_2} F_1\left( \frac{\mu_2}{r} M \right),
$$

\[
\sigma_2(t):= \frac{\mu_1 \mu_2}{r^2} \sigma(\mu_1 \mu_2 t) \quad \text{and} \quad \nu_2(t):= \frac{\mu_1 \mu_2}{r} \nu( \mu_1 \mu_2 t),
\]
and 
\[
a_2(x) := a_1(r^2 x) \quad \text{and} \quad f_2(x):= f_1(r^2x).
\]
Since $u_2$ satisfies the assumptions of the Proposition \ref{step1-induction}, which implies that there exists an affine function $\ell_2$, with universal bounds such that
\[
\sup_{x \in B_r} |u_2(x) - \ell_2(x)| \le r \mu_1.
\]
Iterating inductively the above reasoning, we obtain that 
\[
u_{k+1}(x):= \frac{u_k(rx) - \ell_k(rx)}{\mu_{k+1} r}
\]
solves an equation with a degeneracy law given by 
$$
    H_k(x, t) := \sigma_{k+1} (t) + a_{k+1}(x) \nu_{k+1} (t)
$$
where
$$
    \sigma_{k+1} (t) =\frac{\prod_{i=1}^{k+1} \mu_i }{r^{k+1}} \sigma \left( \prod_{i=1}^{k+1} \mu_i t \right) , \quad \nu_{k+1} (t) =\frac{\prod_{i=1}^{k+1} \mu_i }{r^{k+1}} \nu \left( \prod_{i=1}^{k+1} \mu_i t \right) 
$$
and 
$$
    a_{k+1}(x) = a(r^{k+1} x) .
$$
We recall that $\mu_{k+1} \ge \mu_{k}$ is chosen in such way that either $\mu_{k+1} = \mu_{k}$, if
\begin{equation*}
 \frac{\prod_{i=1}^{k+1} \mu_i }{r^{k+1}} \nu \left( \prod_{i=1}^{k+1} \mu_i c_{k+1} \right) \ge 1,
\end{equation*}
or $\mu_{k+1} > \mu_{k}$, if
\begin{equation*}
 \frac{\prod_{i=1}^{k+1} \mu_i }{r^{k+1}} \nu \left( \prod_{i=1}^{k+1} \mu_i c_{k+1} \right) = 1.
\end{equation*}
As before, we apply the Proposition \ref{step1-induction} to ensure that there exists an affine function $\ell_{k+1}$ satisfying
\[
\sup_{x \in B_r} |u_{k+1}(x) - \ell_{k+1}(x)| \le \mu_1 r.
\]

\medskip

\noindent {\it Step 2.} Scaling back to the solution $u$, we have that
\[
\sup_{x \in B_{r^k}}|u(x) - \ell_k(x)| \le \left( \prod^k_{i=1} \mu_i\right) r^k,
\]
where $\ell_k$ is given by
\[
\ell_k(x):= \ell_0(x) + \sum_{i=1}^{k-1} \ell_i(r^{-i} x) \left( \prod^i_{j=1} \mu_j\right) r^i.
\]
In addition, we can write $\ell_k(x)$ as $ A_k + B_k \cdot x$ and obtain the following estimates

\[
|A_{k+1} - A_k | \le C \left( \prod^{k}_{i=1} \mu_i\right) r^{k},
\]
and 
\[
|B_{k+1} - B_k | \le C \left( \prod^{k}_{i=1} \mu_i\right),
\]
which finishes the proof.
\end{proof}

\bigskip

To finalize the proof of our differentiability result, we need to ensure the convergence of the approximating hyperplanes. This rate of convergence will result in a modulus of continuity $\omega$ associated with the product $\prod_{i=1}^k \mu_i$.

\begin{proof}[Proof of Theorem \ref{main_theorem}]
The proof follows along the lines of \cite{APPT}. For this reason, we only highlight the main steps. The construction of the sequence $(\mu_k)_{k \in \mathbb{N}}$ plays a fundamental role in the proof. We recall that either $\mu_{k_0} = \mu_{k_0 +1}= \mu_{k_0 +2}= \ldots$ for some $k_0\ge 2$, or $\mu_k <\mu_{k+1}$, if
\begin{equation*}
 \frac{\prod_{i=1}^{k+1} \mu_i }{r^{k+1}} \nu \left( \prod_{i=1}^{k+1} \mu_i c_{k+1} \right) = 1.
\end{equation*}
The first case fits within the classical framework, wherein we establish that the solutions are locally of class $\mathcal{C}^{1, \tilde{\beta}}$ for some $\tilde{\beta} \in (0, \beta)$. Next, we analyze the latter case. Define
$$
    (\tau_k)_{k \in \mathbb{N}} = \left( \prod_{i=1}^k \mu_i \right)_{k \in \mathbb{N}}. 
$$
Since $\nu^{-1}$ is Dini continuous, we can conclude as in \cite{APPT} and as in \cite[Proof of Theorem 2]{PS} that this sequence is summable and bounded by $\sum_{i=1}^{\infty} \nu^{-1}(\theta^i)$,  where $\theta^i:= r^i/ \prod_{j=1}^i\mu_j$. Moreover,  $(A_n)_{n \in \mathbb{N}}$, $(B_n)_{n \in \mathbb{N}}$ are Cauchy sequences. Therefore, there exist real numbers $A_{\infty}$, $B_{\infty}$ such that 
\[
A_n \rightarrow A_{\infty} \quad \text{and}  \quad 
B_n \rightarrow B_{\infty}.
\]
Now, define  $\ell_{\infty} (x) = A_{\infty} + B_{\infty} \cdot x$, we have 
$$
    |A_{\infty} - A_n| \le C \sum_{i=n}^{\infty} \tau_i r^n\quad \text{and} \quad | B_{\infty} - B_n| \le C \sum_{i=n}^{\infty}\tau_i r^n.
$$
For any $0 < \rho \ll 1$, let $n \in \mathbb{N}$ such that 
$$
    r^{n+1} < \rho \le r^n,
$$
then, we can estimate
\begin{eqnarray}
    \sup\limits_{x \in B_{\rho}} |u(x) -  \ell_{\infty}(x)| &\le&   \sup\limits_{x \in B_{r^n}} |u(x) - \ell_n (x) | +  \sup\limits_{x \in B_{r^n}} |\ell_n(x) - \ell_{\infty} (x) |  \nonumber \\
    &\le& C\tau_n r^n + C\left( \sum_{i=n}^{\infty} \tau_i \right) r^n \nonumber \\
    &\le&  C\left( \sum_{i=n}^{\infty} \tau_i \right) \rho .
\end{eqnarray}
Next, by setting 
$$
    \omega (t) = C\left( \sum_{i= \lfloor \ln t^{-1}\rfloor }^{\infty} \tau_i \right) 
$$
where $\lfloor s \rfloor$ is  the biggest integer less or equal than $s$, we see that $\omega(t)$ is indeed a modulus of continuity, since $(\tau_k)_{k \in \mathbb{N}}$ is summable sequence. Finally, we have 
$$
    \sup\limits_{x \in B_{\rho}} |u(x) - u(0) - D u(0) \cdot x | \le \omega (\rho) \rho
$$
which completes the proof of Theorem \ref{main_theorem}.
\end{proof} 
\begin{Remark}
Notice that Theorem \ref{main_theorem} naturally extends for equations with multi-phase degeneracy of the form 
\begin{equation}
\left[\sigma(|Du|) + \sum_{i=1}^k a_i(x)\sigma_i(|Du|) \right] F(D^2 u) = f(x) \quad \text{in} \quad B_1.
\end{equation}
As long as we consider the following decay: 
$$
    \sigma (t) = \sigma_0 (t) \ge \sigma_1(t) \ge \cdots \ge \sigma_k (t), \, \, \text{for all}\, \, t \in [0,1]
$$
and $\sigma_k^{-1} (t)$ satisfy the Dini continuity condition and $a_i(\cdot)$ are nonnegative and continuous functions for all $i= 1, \ldots, k$.

\end{Remark}

\begin{Remark}
Assume that the moduli of continuity $\sigma $ and $\nu$ in the equation \eqref{eq_main} satisfy the following condition:
\[
\lim_{t \rightarrow 0 } \frac{\sigma(t)}{t^p}\le C_1 \quad \text{and} \quad \lim_{t \rightarrow 0 } \frac{\nu(t)}{t^q}\le C_2,
\]
where $p < q$ and $C_1, C_2$ nonnegative constants. Under these assumptions, we believe it is possible to prove  H\"older continuity of the gradient of solutions. 
\end{Remark}

\bigskip
\noindent {\bf Acknowledgements:} The authors extend their sincere gratitude to the Department of Mathematics at Iowa State University for its support and hospitality. P\^edra D. S. Andrade is partially supported by the Portuguese government through FCT - Funda\c c\~ao para a Ci\^encia e a Tecnologia, I.P., under the project UIDP/00208/2020 (DOI: 10.54499/UIDP/00208/2020) and the Austrian Science Fund (FWF) project 10.55776/P36295.

\bibliographystyle{plain}
\bibliography{biblio}

\begin{thebibliography}{10}

\bibitem{APPT}
P.~Andrade, D.~Pellegrino, E.~Pimentel, and E.~Teixeira.
\newblock {$C^1$}-regularity for degenerate diffusion equations.
\newblock {\em Adv. Math.}, 409:Paper No. 108667, 34, 2022.

\bibitem{Araujo-Ricarte-Teixeira2015}
D.~Ara\'ujo, G.~Ricarte, and E.~Teixeira.
\newblock Geometric gradient estimates for solutions to degenerate elliptic
  equations.
\newblock {\em Calc. Var. Partial Differential Equations}, 53(3-4):605--625,
  2015.

\bibitem{BBLL24}
S.~Baasandorj, S.-S. Byun, K.-A. Lee, and Se-Chan Lee.
\newblock {$C^{1,\alpha}$}-regularity for a class of degenerate/singular fully
  non-linear elliptic equations.
\newblock {\em Interfaces Free Bound.}, 26(2):189--215, 2024.

\bibitem{PMG18}
P.~Baroni, M.~Colombo, and G.~Mingione.
\newblock Harnack inequalities for double phase functionals.
\newblock {\em Nonlinear Anal.}, 121:206--222, 2015.

\bibitem{PMG15}
Paolo Baroni, Maria Colombo, and Giuseppe Mingione.
\newblock Regularity for general functionals with double phase.
\newblock {\em Calc. Var. Partial Differential Equations}, 57(2):Paper No. 62,
  48, 2018.

\bibitem{BdSRR}
E.~Bezerra~Júnior, J.~V. da~Silva, G.~Rampasso, and G.~Ricarte.
\newblock Global regularity for a class of fully nonlinear {PDE}s with
  unbalanced variable degeneracy.
\newblock {\em J. Lond. Math. Soc. (2)}, 108(2):622--665, 2023.

\bibitem{BSRRV2022}
E.~Bezerra~J\'unior, J.~V. Da~Silva, G.~Rampasso, G.~Ricarte, and H.~Vivas.
\newblock Recent developments on fully nonlinear {PDE}s with unbalanced
  degeneracy.
\newblock {\em Mat. Contemp.}, 51:123--161, 2022.

\bibitem{Birindelli-Demengel2004}
I.~Birindelli and F.~Demengel.
\newblock Comparison principle and {L}iouville type results for singular fully
  nonlinear operators.
\newblock {\em Ann. Fac. Sci. Toulouse Math. (6)}, 13(2):261--287, 2004.

\bibitem{Birindelli-Demengel2006}
I.~Birindelli and F.~Demengel.
\newblock First eigenvalue and maximum principle for fully nonlinear singular
  operators.
\newblock {\em Adv. Differential Equations}, 11(1):91--119, 2006.

\bibitem{Birindelli-Demengel2007}
I.~Birindelli and F.~Demengel.
\newblock Eigenvalue, maximum principle and regularity for fully non linear
  homogeneous operators.
\newblock {\em Commun. Pure Appl. Anal.}, 6(2):335--366, 2007.

\bibitem{Birindelli-Demengel2009}
I.~Birindelli and F.~Demengel.
\newblock Eigenvalue and {D}irichlet problem for fully-nonlinear operators in
  non-smooth domains.
\newblock {\em J. Math. Anal. Appl.}, 352(2):822--835, 2009.

\bibitem{Bronzi-Pimentel-Rampasso-Teixeira}
A.~Bronzi, E.~Pimentel, G.~Rampasso, and E.~Teixeira.
\newblock Regularity of solutions to a class of variable-exponent fully
  nonlinear elliptic equations.
\newblock {\em J. Funct. Anal.}, 279(12):108781, 31, 2020.

\bibitem{Caffarelli1989}
L.~Caffarelli.
\newblock Interior a priori estimates for solutions of fully nonlinear
  equations.
\newblock {\em Ann. of Math. (2)}, 130(1):189--213, 1989.

\bibitem{Caffarelli-Cabre1995}
L.~Caffarelli and X.~Cabr\'{e}.
\newblock {\em Fully nonlinear elliptic equations}, volume~43 of {\em American
  Mathematical Society Colloquium Publications}.
\newblock American Mathematical Society, Providence, RI, 1995.

\bibitem{Colombo-Mingione2015}
M.~Colombo and G.~Mingione.
\newblock Bounded minimisers of double phase variational integrals.
\newblock {\em Arch. Ration. Mech. Anal.}, 218(1):219--273, 2015.

\bibitem{CM2015}
M.~Colombo and G.~Mingione.
\newblock Regularity for double phase variational problems.
\newblock {\em Arch. Ration. Mech. Anal.}, 215(2):443--496, 2015.

\bibitem{Colombo-Mingione2016}
M.~Colombo and G.~Mingione.
\newblock Calder\'{o}n-{Z}ygmund estimates and non-uniformly elliptic
  operators.
\newblock {\em J. Funct. Anal.}, 270(4):1416--1478, 2016.

\bibitem{Crandall-Ishii-Lions1992}
M.~Crandall, H.~Ishii, and P-L. Lions.
\newblock User's guide to viscosity solutions of second order partial
  differential equations.
\newblock {\em Bull. Amer. Math. Soc. (N.S.)}, 27(1):1--67, 1992.

\bibitem{daSilva-Ricarte2020}
J.~V. da~Silva and G.~Ricarte.
\newblock Geometric gradient estimates for fully nonlinear models with
  non-homogeneous degeneracy and applications.
\newblock {\em Calc. Var. Partial Differential Equations}, 59(5):Paper No. 161,
  33, 2020.

\bibitem{Davila-Felmer-Quaas2009}
G.~D\'avila, P.~Felmer, and A.~Quaas.
\newblock Alexandroff-{B}akelman-{P}ucci estimate for singular or degenerate
  fully nonlinear elliptic equations.
\newblock {\em C. R. Math. Acad. Sci. Paris}, 347(19-20):1165--1168, 2009.

\bibitem{Davila-Felmer-Quaas2010}
G.~D\'avila, P.~Felmer, and A.~Quaas.
\newblock Harnack inequality for singular fully nonlinear operators and some
  existence results.
\newblock {\em Calc. Var. Partial Differential Equations}, 39(3-4):557--578,
  2010.

\bibitem{DeFilippis2021}
C.~De~Filippis.
\newblock Regularity for solutions of fully nonlinear elliptic equations with
  nonhomogeneous degeneracy.
\newblock {\em Proc. Roy. Soc. Edinburgh Sect. A}, 151(1):110--132, 2021.

\bibitem{DeFilippis-Mingione2019}
C.~De~Filippis and G.~Mingione.
\newblock A borderline case of {C}alder\'{o}n-{Z}ygmund estimates for
  nonuniformly elliptic problems.
\newblock {\em Algebra i Analiz}, 31(3):82--115, 2019.

\bibitem{Delarue2010}
F.~Delarue.
\newblock Krylov and {S}afonov estimates for degenerate quasilinear elliptic
  {PDE}s.
\newblock {\em J. Differential Equations}, 248(4):924--951, 2010.

\bibitem{ELM2004}
L.~Esposito, F.~Leonetti, and G.~Mingione.
\newblock Sharp regularity for functionals with {$(p,q)$} growth.
\newblock {\em J. Differential Equations}, 204(1):5--55, 2004.

\bibitem{Fang-Radulescu-Zhang2021}
Y.~Fang, V.~R\u{a}dulescu, and C.~Zhang.
\newblock Regularity of solutions to degenerate fully nonlinear elliptic
  equations with variable exponent.
\newblock {\em Bull. Lond. Math. Soc.}, 53(6):1863--1878, 2021.

\bibitem{HH}
P.~Harjulehto. and P.~H\"asto.
\newblock Double phase image restoration.
\newblock {\em J. Math. Anal. Appl.}, 501:123832, 2021.

\bibitem{Hasto-Ok2022}
P.~H\"{a}st\"{o} and J.~Ok.
\newblock Maximal regularity for local minimizers of non-autonomous
  functionals.
\newblock {\em J. Eur. Math. Soc. (JEMS)}, 24(4):1285--1334, 2022.

\bibitem{Imbert2011}
C.~Imbert.
\newblock Alexandroff-{B}akelman-{P}ucci estimate and {H}arnack inequality for
  degenerate/singular fully non-linear elliptic equations.
\newblock {\em J. Differential Equations}, 250(3):1553--1574, 2011.

\bibitem{Imbert-Silvestre2013}
C.~Imbert and L.~Silvestre.
\newblock {$C^{1,\alpha}$} regularity of solutions of some degenerate fully
  non-linear elliptic equations.
\newblock {\em Adv. Math.}, 233:196--206, 2013.

\bibitem{Imbert-Silvestre2016}
C.~Imbert and L.~Silvestre.
\newblock Estimates on elliptic equations that hold only where the gradient is
  large.
\newblock {\em J. Eur. Math. Soc. (JEMS)}, 18(6):1321--1338, 2016.

\bibitem{Liu-Dai2018}
W.~Liu and G.~Dai.
\newblock Existence and multiplicity results for double phase problem.
\newblock {\em J. Differential Equations}, 265(9):4311--4334, 2018.

\bibitem{Marcellini1989}
P.~Marcellini.
\newblock Regularity of minimizers of integrals of the calculus of variations
  with nonstandard growth conditions.
\newblock {\em Arch. Rational Mech. Anal.}, 105(3):267--284, 1989.

\bibitem{Marcellini1991}
P.~Marcellini.
\newblock Regularity and existence of solutions of elliptic equations with
  {$p,q$}-growth conditions.
\newblock {\em J. Differential Equations}, 90(1):1--30, 1991.

\bibitem{PS}
E.~Pimentel and D.~Stolnicki.
\newblock Differentiability of solutions for a degenerate fully nonlinear free
  transmission problem.
\newblock {\em arXiv:2405.05406}, 2024.

\bibitem{Trudinger1988}
N.~Trudinger.
\newblock H\"older gradient estimates for fully nonlinear elliptic equations.
\newblock {\em Proc. Roy. Soc. Edinburgh Sect. A}, 108(1-2):57--65, 1988.

\bibitem{Zhikov1993}
V.~Zhikov.
\newblock Lavrentiev phenomenon and homogenization for some variational
  problems.
\newblock {\em C. R. Acad. Sci. Paris S\'er. I Math.}, 316(5):435--439, 1993.

\bibitem{Zhikov1995}
V.~Zhikov.
\newblock On {L}avrentiev's phenomenon.
\newblock {\em Russian J. Math. Phys.}, 3(2):249--269, 1995.

\bibitem{Zhikov1997}
V.~Zhikov.
\newblock On some variational problems.
\newblock {\em Russian J. Math. Phys.}, 5(1):105--116, 1997.

\bibitem{ZKO-1994}
V.~Zhikov, S.~Kozlov, and O.~Ole\u{\i}nik.
\newblock {\em Homogenization of differential operators and integral
  functionals}.
\newblock Springer-Verlag, Berlin, 1994.
\newblock Translated from the Russian by G. A. Yosifian [G. A. Iosifyan].

\end{thebibliography}

\bigskip

\noindent\textsc{P\^edra D. S. Andrade}\\
Department of Mathematics\\
Paris Lodron Universität Salzburg\\
5020 Salzburg, Austria\\
\noindent\texttt{pedra.andrade@plus.ac.at}

\vspace{1cm}

\noindent\textsc{Thialita M. Nascimento}\\
Department of Mathematics\\
Iowa State University\\
 50011 Ames, IA, United States\\
\noindent\texttt{thnasc@iastate.edu}

%
%
%


\end{document}